\definecolor{darkblue}{rgb}{0,0,0.6}
\newtheorem{thm}{Theorem}[section]
\newtheorem{cor}[thm]{Corollary}
\newtheorem{lem}[thm]{Lemma}
\newtheorem{prop}[thm]{Proposition}
\theoremstyle{definition}
\newtheorem{define}[thm]{Definition}
\theoremstyle{remark}
\newtheorem{rem}[thm]{Remark}
\newcommand{\RR}{\mathbb{R}}
\newcommand{\C}{\mathcal{C}}
\newcommand{\D}{\mathcal{D}}
\newcommand{\F}{\mathcal{F}}
\newcommand{\U}{\mathcal{U}}
\newcommand{\V}{\mathcal{V}}
\newcommand{\M}{\mathcal{M}}
\newcommand{\N}{\mathcal{N}}
\renewcommand{\L}{\mathcal{L}}
\newcommand{\R}{\mathcal{R}}
\newcommand{\I}{\mathcal{I}}
\newcommand{\J}{\mathcal{J}}
\newcommand{\cS}{\mathcal{S}}
\newcommand{\fG}{\mathfrak{G}}
\newcommand{\fC}{\mathfrak{C}}
\DeclareMathOperator{\Hom}{Hom}
\DeclareMathOperator{\Map}{Map}
\DeclareMathOperator{\Sec}{Sec}
\DeclareMathOperator{\Id}{Id}
\DeclareMathOperator{\Ob}{Ob}
\DeclareMathOperator{\Ho}{Ho}
\DeclareMathOperator{\op}{op}
\DeclareMathOperator{\Ne}{N}
\DeclareMathOperator{\proj}{proj}
\DeclareMathOperator{\ad}{ad}
\DeclareMathOperator{\coeq}{coeq}
\DeclareMathOperator{\fib}{fib}
\DeclareMathOperator{\Set}{Set}
\DeclareMathOperator{\Cat}{Cat}
\DeclareMathOperator{\ModCat}{ModCat}
\DeclareMathOperator{\Le}{L}
\DeclareMathOperator{\Ri}{R}
\def\alp{{\alpha}}
\def\bet{{\beta}}
\def\gam{{\gamma}}
\def\del{{\delta}}
\def\sig{{\sigma}}
\def\vphi{{\varphi}}
\def\Del{{\Delta}}
\def\vphi{{\varphi}}
\def\lrar{\longrightarrow}
\def\hrar{\hookrightarrow}
\def\x{\stackrel}
\def\adj{\rightleftarrows}
\def\ovl{\overline}
\numberwithin{equation}{section}
\renewcommand{\tocsection}[3]{%
  \indentlabel{\@ifnotempty{#2}{\bfseries\ignorespaces#1 #2\quad}}\bfseries#3} 
\renewcommand{\tocsubsection}[3]{%
  \indentlabel{\@ifnotempty{#2}{\hspace{1.6em}\ignorespaces#1 #2\quad}}#3}
\setlist[itemize]{leftmargin=*}
\setlist[enumerate]{leftmargin=*}
\title{Lax limits of model categories}
\author{Yonatan Harpaz}
\begin{document}
\maketitle

\begin{abstract}
For a diagram of simplicial combinatorial model categories, we show that the associated lax limit, endowed with the projective model structure, is a presentation of the lax limit of the underlying $\infty$-categories. Our approach can also allow for the indexing category to be simplicial, as long as the diagram factors through its homotopy category. Analogous results for the associated homotopy limit (and other intermediate limits) directly follow.
\end{abstract}
\tableofcontents

\section{Introduction}

In ordinary category theory, the \textbf{lax colimit} of a presheaf $\F: \I^{\op} \lrar \Cat$ of categories is given by the Cartesian fibration $\pi:\int^{\I}\F \lrar \I$ classified by $\F$, while its \textbf{oplax limit} is given by the category of sections of $\pi$. Both of these constructions are fairly fundamental and appear in a wide variety of circumstances. In homotopy theory one often works in a higher categorical setting, where categories are replaced with \textbf{$\infty$-categories}. In this case, the Cartesian fibration $\pi: \int^{\I^{\op}}\F \lrar \I$ acquires an even more prominent role: indeed, presheaves $\F:\I^{\op} \lrar \Cat_\infty$ valued in $\infty$-categories are often hard to write down explicitly, and are hence usually encoded directly as Cartesian fibrations over $\I$ via the straightening-unstraightening equivalence (in which the unstraightening construction is the $\infty$-categorical analogue of the Grothendieck construction). As was proven by Gepner--Haugseng--Nikolaus \cite{GHN15}, in the higher categorical setting the total space of the Cartesian fibration $\pi:\int^\I\F  \lrar \I$ is still a model for the lax colimit of $\F$, while the $\infty$-category of sections of $\pi$ is a model for the corresponding oplax limit (the reader should be warned however that the notation conventions of loc.\ cit.\ for lax versus oplax are different than the conventions in ordinary category theory). A closely related invariant is given by the \textbf{homotopy} limit and colimit of $\F$. These are related to the (op)lax limit and colimit as follows: the homotopy colimit of $\F$ is given by localizing the $\infty$-category $\int^\I\F$ by the collection of $\pi$-Cartesian edges \cite[Corollary 3.3.4.3]{higher-topos}, while the homotopy limit is given by the full sub-$\infty$-category of sections $s: \I \lrar \int^\I\F$ which send every edge to a Cartesian edge \cite[Corollary 3.3.3.2]{higher-topos}. The (op)lax and homotopy (co)limit constructions can be put on an equal footing if one considers more generally limits of diagrams indexed by \textbf{marked $\infty$-categories}, that is, $\infty$-categories $\I$ equipped with a collection of marked edges $E$ (which are not necessarily equivalences in $\I$ and are not necessarily sent to equivalences in $\Cat_\infty$). The $E$-colimit of a presheaf $\F: \I^{\op} \lrar \Cat_\infty$ is then the localization of $\int^\I\F$ by the collection of $\pi$-Cartesian edges lying over edges in $E$, while the $E$-limit is given by the full sub-$\infty$-category of sections $s: \I \lrar \int^\I\F$ which send every edge in $E$ to a $\pi$-Cartesian edge. In this paper we will simply take this as the \textbf{definition} of $E$-limits and $E$-colimits, though we note that the notion of an $E$-(co)limit can be defined abstractly for diagrams taking values in an arbitrary $(\infty,2)$-category: this is part of current work in progress \cite{primer} to construct a convenient framework for studying (op)lax (co)limits in an $(\infty,2)$-categorical setting while enjoying suitable analogues of familiar properties from the $(\infty,1)$-categorical context, such as base change along cofinal maps. 

We mentioned above that presheaves of $\infty$-categories are rarely given explicitly. A notable exception to this statement is the situation in which the $\infty$-categories in question are all presented by \textbf{model categories}, and our presheaf $\F: \I^{\op} \lrar \Cat_\infty$ comes from a (pseudo-)functor $\M:\I \lrar \ModCat$ to the $(2,1)$-category of model categories and Quillen adjunctions. Such functors are also known as \textbf{Quillen presheaves} (see, e.g., \cite{barwick}).
Given a Quillen presheaf $\M:\I \lrar \ModCat$, we may associate to it a presheaf $\M^{\Ri}_{\infty}:\I^{\op} \lrar \Cat_\infty$ by post-composing with the functor $\ModCat \lrar \Cat_\infty$ which associates to each model category $\M$ its underlying $\infty$-category $\M_\infty$, and to each Quillen adjunction $\L: \M \adj \N:\R$ the associated right derived functor $\RR\R: \N_\infty \lrar \M_\infty$. In this case it is natural to ask if one can present the oplax limit $\infty$-category of $\M^{\Ri}_{\infty}$ via a suitable model structure on the 1-categorical oplax limit of the functor $\M^{\Ri}:\I^{\op} \lrar \Cat$ obtained by forgetting the model structure and keeping just the underlying right adjoints. 
We note that this oplax limit is also the lax limit of the functor $\M^{\Le}:\I^{\op} \lrar \Cat$ obtained by keeping only the underlying left adjoints. 
Indeed, the diagram of categories and adjunctions underlying $\M$ can be encoded by a functor $\pi:\int\M := \int^\I\M^{\Ri} = \int_\I \M^{\Le} \lrar \I$ which is simultaneously the Cartesian fibration classified by $\M^{\Ri}$ and the coCartesian fibration classified by $\M^{\Le}$. The category
$$ \Sec_\I(\M) = \left\{\left.s: \I \lrar \int\M \right| \pi \circ s = \Id\right\} $$
of sections of $\pi$ can be explicitly described as follows: the objects of $\Sec_\I(\M)$ are given by collections $s(i) \in \M(i)$ for each $i \in \I$ together with maps $s_\alp: \alp_!s(i) \lrar s(i')$ (equivalently, maps $s^\alp: s(i) \lrar \alp^*s(i')$) for every morphism $\alp: i \lrar i'$ in $\I$, where $\alp_! \dashv \alp^*$ is the Quillen adjunction associated to $\alp$. This data is required to satisfy the usual compatibility conditions for each commutative triangle
$$ \xymatrix{
& i' \ar^{\bet}[dr]\ar_{\alp}[dl] & \\
i \ar^{\gam}[rr] &&  i'' \\
}$$
in $\I$. As shown in~\cite{barwick}, when each $\M(i)$ is a \textbf{combinatorial model category} one can endow $\Sec_\I(\M)$ with the \textbf{projective model structure} $\Sec^{\proj}_\I(\M)$, in which a map $T: s \lrar s'$ is a weak equivalence/fibration if and only if $T(i): s(i) \lrar s'(i)$ is a weak equivalence/fibration for every $i$. One may then phrase the question eluded to above more formally as follows: is the model category $\Set^{\proj}_\I(\M)$ a presentation of the $\infty$-categorical lax limit of $\M^{\Ri}_{\infty}$?

A slightly more structured case which is more convenient to handle is when $\M$ is a \textbf{simplicial Quillen presheaf}, that is, a diagram taking values in the $(2,1)$-category $\ModCat_{\Del}$ of simplicial model categories and simplicial Quillen adjunctions. In this case, if each $\M(i)$ is also combinatorial, then the model category $\Sec^{\proj}_\I(\M)$ inherits a (fiberwise) simplicial structure. We then have a relatively direct access to the underlying $\infty$-category of $\Sec^{\proj}_\I(\M)$ by taking the coherent nerve of the full simplicial subcategory $\Sec^{\circ}_\I(\M) \subseteq \Sec_\I(\M)$ spanned by fibrant-cofibrant sections. 

Given a subset $E$ of morphisms in $\I$, this automatically yields a potential model for the $E$-limit of $\M^{\Ri}_\infty$ via a suitable left Bousfield localization of $\Set^{\proj}(\M)$, where the new fibrant objects are the old fibrant objects for which in addition the composed map
\begin{equation}\label{e:E-car}
s(i) \x{s^{\alp}}{\lrar} \alp^*s(i') \lrar \RR\alp^*s(i')
\end{equation}
is a weak equivalence in $\M(i)$ for every $\alp: i \lrar i'$ which belongs to $E$. When this left Bousfield localization exists we call the resulting model structure the \textbf{$E$-Cartesian model structure} and denote it by $\Sec^{\proj}_{\I,E}(\M)$. In general, the desired localization will not exist as a model category, except in special circumstances, such as when $\Sec^{\proj}_{\I}(\M)$ is left proper. 
We note that since colimits in $\Sec_{\I}(\M)$ are computed levelwise and every projective cofibration is also a levelwise cofibration, the model category $\Set^{\proj}(\M)$ is left proper as soon as each $\M(i)$ is left proper. Even when the $E$-Cartesian model structure does not exist one may always consider the full simplicial subcategory $\Sec^{\circ}_{\I,E}(\M) \subseteq \Sec^{\circ}_{\I}(\M)$ spanned by those fibrant-cofibrant sections such that~\eqref{e:E-car} is a weak equivalence for every $\alp \in E$. We may then ask if the coherent nerve of $\Sec^{\circ}_{\I,E}(\M)$ is equivalent to the $\infty$-categorical $E$-limit of $\M^{\Ri}_{\infty}$ (when the localized model structure exists this is equivalent to asking whether the model category $\Set^{\proj}_E(\M)$ is a presentation of this $\infty$-category).

Our main result in this paper is that for simplicial Quillen presheaves taking values in combinatorial model categories, the model category $\Sec^{\proj}_\I(\M)$ is indeed a presentation of the $\infty$-categorical oplax limit of $\M^{\Ri}_\infty$ (equivalently, the lax limit of $\M^{\Le}_{\infty}$). This also implies rather directly that $\Sec^{\circ}_{\I,E}(\M)$ is a simplicial model for the $\infty$-categorical $E$-limit of $\M^{\Ri}_{\infty}$ for every collection $E$ of edges in $\I$. In fact, we prove a more general statement where $\I$ is allowed to be a \textbf{simplicial category}. In this case it is not a-priori clear what a diagram of model categories indexed by $\I$ actually means. Though we have an idea of how this should be defined in general, we chose in this paper to restrict attention to diagrams which factor through the \textbf{homotopy category} of $\I$. In particular, given a simplicial Quillen presheaf $\M: \Ho(\I) \lrar \ModCat_{\Del}$, 
a simplicial section $s$ of $\M$ along $\I$ is given by the data of:
\begin{enumerate}
\item[-]
an object $s(i) \in \M(i)$ for every $i \in \I$;
\item[-]
a map 
$s_{\alp}:\Map^{\alp}_{\I}(i,i') \otimes \alp_!s(i) \lrar s(i')$ in $\M(i')$ for every $i,i' \in \I$ and every morphism $\alp: i \lrar i'$ in $\Ho(\I)$, 
where $\Map^{\alp}_{\I}(i,i') \subseteq \Map_{\I}(i,i')$ is the component of $\Map_{\I}(i,i')$ determined by $\alp$. 
\end{enumerate}
One can then show that the projective model structure on the category $\Sec_{\I}(\M)$ of simplicial sections still exists in this more generalized setting (see Proposition~\ref{p:model} below).
Our main result in this paper can then be formulated as follows:

\begin{thm}\label{t:main-intro}
Let $\I$ be a fibrant simplicial category. Let $\M: \I \lrar \Ho(\I) \lrar \ModCat_{\Del}$ a simplicial Quillen presheaf which factors through $\Ho(\I)$ and takes values in combinatorial model categories. Then $\Sec^{\proj}_{\I}(\M)$ is a presentation of the $\infty$-categorical oplax limit of $\M^{\Ri}_\infty:\Ne(\I)^{\op} \lrar \Cat_\infty$ (equivalently, the lax limit of $\M^{\Le}_{\infty}:\Ne(\I) \lrar \Cat_\infty$). Furthermore, if $E$ is any collection of maps in $\I$ then the simplicial category $\Sec^{\circ}_{\I,E}(\M)$ is a model for the $\infty$-categorical $E$-limit of $\M^{\Ri}_\infty$ (that is, the $\infty$-category of sections which send every edge in $E$ to a Cartesian edge). In particular, if $\M(i)$ is left proper for every $i \in \I$ then the $E$-Cartesian model structure $\Sec^{\proj}_{\I,E}(\M)$ presents the $\infty$-categorical $E$-limit of $\M^{\Ri}_\infty$.
\end{thm}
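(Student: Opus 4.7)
The plan is to prove the three assertions in sequence, reducing the latter two to the first. The heart of the matter is showing that the coherent nerve of $\Sec^{\circ}_{\I}(\M)$ is equivalent to the oplax limit of $\M^{\Ri}_\infty$. Granting this, the $E$-version for fibrant-cofibrant simplicial sections follows at once: $\Sec^{\circ}_{\I,E}(\M)$ is cut out inside $\Sec^{\circ}_{\I}(\M)$ by exactly the condition~\eqref{e:E-car}, which translates under the equivalence to ``sends every edge in $E$ to a Cartesian edge'' in the Cartesian fibration unstraightening $\M^{\Ri}_\infty$. The final assertion follows because, under left properness, the left Bousfield localization $\Sec^{\proj}_{\I,E}(\M)$ exists and its fibrant-cofibrant objects are precisely the sections comprising $\Sec^{\circ}_{\I,E}(\M)$, whence the two underlying $\infty$-categories agree.

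To prove the main comparison, my plan is to exhibit an explicit Cartesian fibration classifying $\M^{\Ri}_\infty$ and identify its $\infty$-category of sections with $\Ne(\Sec^{\circ}_\I(\M))$. Concretely, I would consider the simplicially enriched Grothendieck construction $\int\M$, restrict to fibrant-cofibrant objects in each fiber to obtain a simplicial subcategory $\int^\circ\M$, and form the coherent nerve $\Ne(\int^\circ\M) \lrar \Ne(\I)$. Using the assumption that $\M$ factors through $\Ho(\I)$ together with an explicit calculation of simplicial mapping spaces in $\int^\circ\M$ (each decomposing as a disjoint union over $\Ho(\I)$-components, with each component a homotopy fiber involving the derived functors $\RR\alp^*$), I would verify that a suitable fibrant replacement of this projection in the Cartesian model structure on marked simplicial sets over $\Ne(\I)$ realizes the straightening/unstraightening equivalence and hence models $\M^{\Ri}_\infty$. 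At that point the oplax limit is, by the definition adopted in the introduction, the $\infty$-category of sections of this Cartesian fibration, and the remaining task is to identify it with $\Ne(\Sec^\circ_{\I}(\M))$.

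The main obstacle, as I see it, is this last comparison between strict simplicial sections and homotopy-coherent sections. The natural comparison functor $\Ne(\Sec^\circ_\I(\M)) \lrar \Sec_{\Ne(\I)}(\Ne(\int^\circ \M))$ is evident, and I plan to show it is an equivalence by verifying essential surjectivity (every homotopy-coherent section admits a strict rectification to a fibrant-cofibrant one) and full faithfulness (mapping spaces agree). For essential surjectivity I would argue by induction along a skeletal filtration of $\I$ as a simplicial category, at each stage extending a strict section across a ``cell'' via a lifting argument combining projective cofibrancy of the current stage with the fibrancy supplied by Proposition~\ref{p:model}. For full faithfulness, the simplicial enrichment of $\Sec^\circ_\I(\M)$ combined with the standard description of mapping spaces in section $\infty$-categories as homotopy ends of mapping spaces in the fibers should furnish the identification. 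Throughout, the factorization-through-$\Ho(\I)$ hypothesis is crucial: it forces the coherence data appearing in a simplicial section to involve only individual path components $\Map^{\alp}_\I(i,i')$, each acting on fibers by a single derived functor, which is what makes the projective model structure adapted to the problem and ultimately makes the rectification tractable.
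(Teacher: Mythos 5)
Your overall architecture matches the paper's: the same simplicially enriched Grothendieck construction $\int^{\circ}\M$ of fibrant-cofibrant objects, the same identification of the $p$-Cartesian edges of $\fG = \Ne(\int^{\circ}\M)$ by the condition that the adjoint $f^{\ad}\colon Y \to \bet^*Z$ be a weak equivalence (Lemma~\ref{l:Cartesian-edge}), and the same reductions of the $E$-marked statement (cut out by condition~\eqref{e:E-car}, which corresponds exactly to ``marked edges go to Cartesian edges'') and of the left-proper/Bousfield-localized statement to the unmarked comparison. One small correction: no fibrant replacement of $\fG^{\natural}$ is needed, since $p$ is already a Cartesian fibration (Corollary~\ref{c:Cartesian}) precisely because the fibers have been restricted to fibrant-cofibrant objects; and note that both you and the paper essentially take for granted that $\fG^{\natural}$ is the unstraightening of $\M^{\Ri}_{\infty}$, the oplax limit being \emph{defined} as the sections of that Cartesian fibration.

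Where you genuinely diverge is in the proof of the central equivalence $\Ne(\Sec^{\circ}_{\I}(\M)) \simeq \Map^{\flat}_{\Ne(\I)}(\Ne(\I)^{\flat},\fG^{\natural})$, and this is also where your sketch is thinnest. You propose essential surjectivity by skeletal induction on $\I$ plus full faithfulness via homotopy ends of fiberwise mapping spaces. The paper instead follows the template of \cite[Proposition 4.2.4.4]{higher-topos}: a reduction (Lemma~\ref{l:set-theory}, which also handles the size issues you do not address --- these are large $\infty$-categories) to checking bijectivity on $\pi_0$ of all $K$-exponentials, with the exponentials on both sides identified with sections over $\fC(K)\times\J$, respectively $K\times X$. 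The decisive point, which your plan misses, is that a coherent section $X \to \fG$ over $\Ne(\I)$ \emph{is already}, by adjunction, a strict fibrant object of $\Sec^{\proj}_{\fC(X)}(\M)$; so no cell-by-cell rectification is ever performed. All rectification is packaged once and for all into the base-change Quillen equivalence $\psi_! \dashv \psi^*$ along the weak equivalence $\fC(\Ne(\J)) \to \J$ (Proposition~\ref{p:change-J}, the analogue of \cite[A.3.3.8]{higher-topos}), whose proof via local trivial cofibrations and the pushout-product axiom is exactly the lifting argument your induction would have to reinvent at each skeletal stage. Your route is viable in principle (it is the Dwyer--Kan/Vogt-style approach), but as written it names neither the extension lemma needed for the induction nor the identification of mapping spaces on the coherent side needed for the end computation; until those are supplied, the core of the theorem remains unproven in your version. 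I would recommend either importing Proposition~\ref{p:change-J} as a black box and restructuring around it, or committing to proving the corresponding extension/lifting lemma explicitly.
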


The proof of Theorem~\ref{t:main-intro} will be given in \S\ref{s:proof}, summarized by Corollary~\ref{c:E-car}. 

\begin{rem}
In Theorem~\ref{t:main-intro}, the condition that $\I$ is fibrant is only needed to insure that $\Ne(\I)$ has the correct type, but is otherwise superfluous (see Proposition~\ref{p:change-J}).
\end{rem}

\begin{rem}
Any combinatorial model category is Quillen equivalent to a simplicial left proper one \cite{Dug1}, though not canonically. It is hence not a-priori clear if every diagram of combinatorial model categories can be replaced with a Quillen equivalent diagram of simplicial model categories and simplicial Quillen adjunction (though it seems rather likely that the argument of \cite{Dug1} can be made to work ``in families'', at least in special cases). By contrast, left proper combinatorial model categories can be functorially replaced with simplicial (and left proper) ones \cite{Dug2}, and so if we already know that each $\M(i)$ is left proper than we may replace $\M$ with a Quillen equivalent diagram taking values in $\ModCat_\Del$.
\end{rem}

\subsection{Relation to other work}
Results similar to Theorem~\ref{t:main-intro} have appeared before in the literature. When the diagram $\M: \I \lrar \ModCat_{\Del}$ is constant the model category $\Sec^{\proj}_{\I}(\M)$ reduces to the category $\M^\I$ of simplicial functors with the projective model structure, in which case it was proven by Lurie \cite[Proposition 4.2.4.4]{higher-topos} that $\M^\I$ models the $\infty$-category of functors $\Ne(\I) \lrar \M_\infty$. Our proof of Theorem~\ref{t:main-intro} is based on a similar approach to that of \cite{higher-topos}. On the other hand, when $\M$ is not necessarily constant (nor simplicial or combinatorial) but $\I$ is an ordinary category and $E$ contains all edges (i.e., the case of homotopy limits) then the coincidence with the $\infty$-categorical limit was proven by Bergner \cite{Ber11}, though with very different methods from the present paper. 

When $\I$ is an ordinary category and $\M$ is not necessarily simplicial or combinatorial a result similar to our main theorem was also recently established by Balzin \cite{Bal18} using yet another approach. Balzin's main theorem concerns certain families of model categories indexed by a Reedy category, and his result for Quillen presheaves is obtained by passing to the category of simplices of $\I$. Our approach is somewhat more direct and yields, in particular, a shorter proof of that statement in the simplicial combinatorial case. On the other hand, when $\I$ is Reedy Balzin's result covers more general types of families of model categories, which are not necessarily Quillen presheaves.

\section{Simplicial Quillen presheaves}\label{s:compare}

Throughout this section we let $\I$ denote a fixed \textbf{ordinary} category. By a \textbf{simplicial Quillen presheaf} on $\I$ we will mean a (pseudo-)functor $\M: \I \lrar \ModCat_{\Del}$ to the category of simplicial model categories and simplicial Quillen adjunctions. This data is equivalent via the Grothendieck construction to the data of a functor $\int\M \lrar \I$ which is both a Cartesian and coCartesian fibration, together with a simplicial structure on each fiber such that the coCartesian transition maps are simplicial left Quillen functors (this automatically implies that the Cartesian transition maps are simplicial right Quillen functors). We note that in this situation the category $\int\M$ inherits a natural enrichment over the category $\Set_{\Del}$ of simplicial sets: for $i,j \in \I$ and objects $X \in \M(i),Y \in \M(j)$, the simplicial mapping space $\Map((i,X),(j,Y))$ is given by
\begin{equation}\label{e:simp} 
\Map((i,X),(j,Y)) = \coprod_{\alp: i \to j} \Map_{\M(j)}\left(\alp_!X,Y\right) 
\end{equation}
where the coproduct is taken over all maps $\alp: i \lrar j$ in $\I$ and $\alp_!:\M(i) \lrar \M(j)$ is the left Quillen transition functor associated to $\alp$. The resulting simplicial category is not fibrant in general. It will hence be useful to consider instead the full subcategory
$$ \int^{\circ} \M \subseteq \int\M $$
consisting of all objects $(i,X)$ such that $X$ is fibrant and cofibrant in $\M(i)$. The simplicial category $\int^{\circ}\M$ is then fibrant and we can pass to its coherent nerve
$$ \fG = \Ne\left(\int^{\circ}\M\right) .$$
Then $\fG$ is a (large) $\infty$-category carrying a natural map
$$ p:\fG \lrar \Ne(\I) .$$
Since $\fG$ is an $\infty$-category and $\Ne(\I)$ is the nerve of a $1$-category the map $p$ is automatically an inner fibration.

\begin{lem}\label{l:Cartesian-edge}
Let $(Y,j),(Z,k) \in \int^{\circ}\M$ be objects. Let $\bet: j \lrar k$ be a morphism in $\I$ and $f: \bet_!Y \lrar Z$ a morphism in $\M(k)$. Then the edge of $\fG$ corresponding to $(\bet,f)$ is $p$-Cartesian if and only if the adjoint map
$ f^{\ad}: Y \lrar \bet^*Z $
is a weak equivalence in $\M(j)$.
\end{lem}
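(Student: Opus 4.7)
My plan is to verify the standard mapping-space criterion for $p$-Cartesian edges (see, e.g., \cite[Proposition 2.4.4.3]{higher-topos}) by direct computation in the simplicial category $\int^\circ\M$, using the explicit description~\eqref{e:simp} and the simplicial adjunction $\beta_!\dashv\beta^*$.

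First I unwind the criterion: the edge $(\beta,f)$ is $p$-Cartesian if and only if for every $(i,X) \in \int^\circ\M$ the canonical map
$$\Map_{\int^\circ\M}\bigl((i,X),(j,Y)\bigr) \lrar \Map_{\int^\circ\M}\bigl((i,X),(k,Z)\bigr) \times^h_{\Map_{\I}(i,k)} \Map_{\I}(i,j)$$
is a weak equivalence of simplicial sets. Since $\I$ is an ordinary category the sets $\Map_{\I}(i,j)$ and $\Map_{\I}(i,k)$ are discrete, so the homotopy pullback reduces to an ordinary pullback, and~\eqref{e:simp} identifies the source as $\coprod_\alpha \Map_{\M(j)}(\alpha_! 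X, Y)$ and the target as $\coprod_\alpha \Map_{\M(k)}(\beta_!\alpha_! X, Z)$, both indexed by $\alpha: i \to j$; the comparison sends $g \in \Map_{\M(j)}(\alpha_!X,Y)$ to $f\circ \beta_!(g) \in \Map_{\M(k)}(\beta_!\alpha_!X,Z)$. The simplicial adjunction $\beta_!\dashv\beta^*$ turns this, naturally in $\alpha$ and $X$, into post-composition with $f^{\ad}:Y\to\beta^*Z$:
$$(f^{\ad})_*:\Map_{\M(j)}(\alpha_! X, Y) \lrar \Map_{\M(j)}(\alpha_! X, \beta^*Z).$$
Note that $\alpha_!X$ is cofibrant in $\M(j)$ (since $X$ is cofibrant and $\alpha_!$ is left Quillen) and that both $Y$ and $\beta^*Z$ are fibrant in $\M(j)$ (the latter because $Z$ is fibrant and $\beta^*$ is right Quillen).

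For the ``if'' direction, when $f^{\ad}$ is a weak equivalence between fibrant objects, the simplicial model axiom SM7 implies that $(f^{\ad})_*$ is a weak equivalence of Kan complexes for every cofibrant input, and the criterion is satisfied. Conversely, suppose $(\beta,f)$ is $p$-Cartesian. Specializing to $(i,X)=(j,W)$ and $\alpha = \id_j$ for an arbitrary fibrant-cofibrant $W \in \M(j)$, we obtain that $(f^{\ad})_*:\Map_{\M(j)}(W,Y) \to \Map_{\M(j)}(W,\beta^*Z)$ is a weak equivalence; in particular, post-composition with $f^{\ad}$ induces a bijection $[W,Y]\cong [W,\beta^*Z]$ in $\Ho(\M(j))$ for every such $W$. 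By Yoneda in $\Ho(\M(j))$, $f^{\ad}$ is an isomorphism there, i.e., a weak equivalence. The only real obstacle in this argument is the bookkeeping required to identify the comparison map with $(f^{\ad})_*$ on each $\alpha$-summand, which ultimately follows from the unit/counit formalism of the simplicial adjunction $\beta_!\dashv\beta^*$.
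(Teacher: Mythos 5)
Your proposal is correct and follows essentially the same route as the paper's proof: both apply the mapping-space criterion for $p$-Cartesian edges, use~\eqref{e:simp} and the simplicial adjunction $\bet_!\dashv\bet^*$ to identify the comparison on each $\alp$-summand with post-composition by $f^{\ad}$, deduce the forward direction from SM7 (with $\alp_!X$ cofibrant and $Y$, $\bet^*Z$ fibrant), and the converse by specializing to $i=j$, $\alp=\Id$ and detecting the weak equivalence between fibrant objects via mapping spaces out of fibrant-cofibrant objects. No gaps.
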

\begin{proof} 
First assume that $f^{\ad}$ is a weak equivalence. By the mapping space criteria for $p$-Cartesian edges \cite[Proposition 2.4.1.10]{higher-topos} what we need to show is that the commutative diagram
\begin{equation}\label{e:car-edge} 
\xymatrix{
\Hom_{\int\M}((i,X),(j,Y)) \ar^{(f,\bet)_*}[r]\ar[d] & \Hom_{\int\M}((i,X),(k,Z)) \ar[d] \\
\Hom_{\I}(i,j) \ar^{\bet_*}[r] & \Hom_{\I}(i,k) \\
}
\end{equation}
is homotopy Cartesian for every $i \in \I$ and fibrant-cofibrant object $X \in \M(i)$. Considering the homotopy fibers of the vertical maps and using~\eqref{e:simp} it will suffice to show that for every morphism $\alp: i \lrar j$ in $\I$, the induced map
$$ (\bet,f)_*: \Map_{\M(j)}(\alp_!X,Y) \lrar \Map_{\M(k)}(\bet_!\alp_!X,Z) $$
is a weak equivalence of simplicial sets. We now observe that under the adjunction isomorphism
\begin{equation}\label{e:fiber-car} 
\Map_{\M(k)}(\bet_!\alp_!X,Z) \cong \Map_{\M(j)}(\alp_!X,\bet^*Z) 
\end{equation}
the map $(\bet,f)_*$ is given by post-composing with $f^{\ad}: Y \lrar \bet^*Z $. Since $f^{\ad}$ is a weak equivalence between fibrant objects and $\alp_!X$ is cofibrant we get the $(\bet,f)_*$ is indeed a weak equivalence of simplicial sets.

In the other direction, assume that the edge associated to $(\bet,j)$ is $p$-Cartesian. Then for every $i \in \I$ and every fibrant-cofibrant $X \in \M(i)$ the square~\eqref{e:car-edge} is homotopy Cartesian and hence for every $\alp:i \lrar j$ the map~\eqref{e:fiber-car} is a weak equivalence of simplicial sets. Taking $i=j$ and $\alp=\Id$ we now get that the map
$$ \Map_{\M(j)}(X,Y) \lrar \Map_{\M(j)}(X,\bet^*Z) $$
obtained by post-composing with $f^{\ad}:Y \lrar \bet^*Z$ is a weak equivalence of simplicial sets for every fibrant-cofibrant $X \in \M(j)$. Since $Y$ and $\bet^*Z$ are fibrant it follows that $f^{\ad}$ is a weak equivalence, as desired.
\end{proof}

\begin{cor}\label{c:Cartesian}
The map $p$ is a Cartesian fibration.
\end{cor}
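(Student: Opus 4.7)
The plan is to leverage Lemma~\ref{l:Cartesian-edge} to reduce the problem to producing enough Cartesian lifts. Since the excerpt already notes that $p$ is an inner fibration (as a map from an $\infty$-category to the nerve of a 1-category), it suffices to check the Cartesian lifting property: for every object $(Z,k) \in \int^{\circ}\M$ and every morphism $\bet: j \lrar k$ in $\I$, we need to produce a $p$-Cartesian edge of $\fG$ ending at $(Z,k)$ and lying over $\bet$.

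By Lemma~\ref{l:Cartesian-edge}, such an edge corresponds to a choice of fibrant-cofibrant object $Y \in \M(j)$ together with a map $f: \bet_!Y \lrar Z$ in $\M(k)$ whose adjoint $f^{\ad}: Y \lrar \bet^*Z$ is a weak equivalence in $\M(j)$. The natural candidate is to take $Y$ to be a cofibrant replacement of $\bet^*Z$. More precisely, I would first observe that since $Z$ is fibrant in $\M(k)$ and $\bet^*$ is a right Quillen functor, the object $\bet^*Z$ is fibrant in $\M(j)$. I would then factor the map $\bet^*Z \lrar \ast$ by first choosing a trivial fibration $Y \lrar \bet^*Z$ with $Y$ cofibrant (using the functorial factorization of the model structure on $\M(j)$). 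Since trivial fibrations are in particular fibrations and the composition of two fibrations is a fibration, the object $Y$ is automatically also fibrant, hence fibrant-cofibrant. Setting $f: \bet_!Y \lrar Z$ to be the adjoint of the trivial fibration $f^{\ad}: Y \xrightarrow{\sim} \bet^*Z$, Lemma~\ref{l:Cartesian-edge} guarantees that the corresponding edge of $\fG$ is $p$-Cartesian.

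Combined with the fact that $p$ is an inner fibration, the existence of these Cartesian lifts shows that $p$ is a Cartesian fibration, as desired. I do not anticipate any real obstacle here; the argument is essentially a direct consequence of Lemma~\ref{l:Cartesian-edge} together with the basic observation that a right Quillen functor preserves fibrant objects, and the proof should be short.
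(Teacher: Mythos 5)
Your proposal is correct and follows exactly the paper's own argument: reduce via Lemma~\ref{l:Cartesian-edge} to producing, for each $\bet: j \to k$ and fibrant-cofibrant $Z \in \M(k)$, a fibrant-cofibrant $Y$ with a weak equivalence $Y \to \bet^*Z$, and obtain it as a cofibrant replacement of the fibrant object $\bet^*Z$ by a trivial fibration. The only difference is that you spell out the (correct) observation that $Y$ is automatically fibrant, which the paper leaves implicit.
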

\begin{proof}
In light of Lemma~\ref{l:Cartesian-edge} it will suffice to show that for every morphism $\bet:j \lrar k$ in $\I$ and for every fibrant-cofibrant object $Z \in \M(k)$ there exists a fibrant-cofibrant object $Y \in \M(j)$ admitting a weak equivalence $Y \x{\simeq}{\lrar} \bet^*Z$. But this is clear since we can choose a trivial fibration $Y \x{\simeq}{\lrar} \bet^*Z$ such that $Y$ is cofibrant.
\end{proof}

\begin{rem}
Using a dual argument one can show that the map $p: \fG \lrar \Ne(\I)$ is also a \textbf{coCartesian fibration}.
\end{rem}

\section{Categories of simplicial sections}\label{s:base}

In this paper we are interested in simplicial Quillen presheaves on simplicial categories, but restrict attention (mostly for simplicity) to those which factor through the corresponding homotopy category. In such a situation, it is convenient to allow for a bit of extra flexibility by considering an arbitrary functor $\vphi:\J \lrar \I$, where $\J$ is a simplicial category and $\I$ is an ordinary category (which is not necessarily the homotopy category of $\J$). We may then consider simplicial Quillen presheaves parameterized by $\I$, and take simplicial sections along $\J$. We note that this does not result in true additional generality, since in any case the map $\vphi$ factors as $\J \lrar \Ho(\J) \lrar \I$, and if we start with a Quillen presheaf on $\I$ then its sections along $\J$ are the same whether we consider it as parameterized by $\I$ or $\Ho(\J)$. 

To set up the stage let us hence fix an ordinary category $\I$ and a simplicial category $\J$ equipped with a functor $\vphi:\J \lrar \I$. 
Given a simplicial Quillen presheaf $\M: \I \lrar \ModCat_{\Delta}$, we define $\Sec_{\J}\left(\M\right)$ to be the category of simplicial functors $s:\J \lrar \int\M$ over $\I$, with respect to the simplicial enrichment of $\int\M$ described in~\S\ref{s:compare}. Here we use the notation $\Sec_{\J}(\M)$ to indicate that we think of these functors as \textbf{sections} of $\M$ along $\J$. More explicitly, an
$s\in\Sec_{\J}\left(\M\right)$ is given by the data of an object $s(j) \in \M(\vphi(j))$ for each $j \in \J$, and for every $j,j' \in \J$ and map $\alp: \vphi(j) \lrar \vphi(j')$ in $\I$, a map in $\M(\vphi(j'))$ of the form
\begin{equation}\label{e:salp} 
s_{\alp}:\Map^{\alp}_{\J}(j,j') \otimes \alp_!s(j) \lrar s(j'), 
\end{equation}
where $\Map^{\alp}_{\J}(j,j') \subseteq \Map_{\J}(j,j')$ denotes the pre-image of $\alp \in \Map_{\I}(\vphi(j),\vphi(j'))$ in $\Map_{\J}(j,j')$ (which is a union of connected components since $\I$ is mapping-wise discrete). Given a map $\bet: j \lrar j'$ lying above $\alp:\vphi(j) \lrar \vphi(j')$ (i.e., $\bet$ is a vertex of $\Map_\J(j,j')$ which lies on $\Map^{\alp}_\J(j,j')$), we will denote by $s_{\bet}$
the composed map 
\begin{equation}\label{e:sbet} 
s_{\bet}:\alp_!s(j) \cong \Del^0 \otimes \alp_!s(j) \x{\bet_*}{\lrar} \Map^{\alp}_{\J}(j,j') \otimes \alp_!s(j) \lrar s(j').
\end{equation}

\begin{define}
Let $T: s \lrar t$ be a map in $\Sec_{\J}(\M)$. We will say that $T$ is a \textbf{levelwise} weak equivalence (resp. fibration, cofibration) if $T(j): s(j) \lrar t(j)$ is a weak equivalence (resp.\ fibration, cofibration) in $\M(j)$ for every $j$. 
\end{define}

\begin{prop}\label{p:model}
Let $\M: \I \lrar \ModCat_{\Del}$ be a simplicial Quillen presheaf such that each $\M(i)$ is combinatorial. Then there exists a combinatorial simplicial model structure $\Sec^{\proj}_{\J}(\M)$, which we will call the \textbf{projective model structure}, such that the weak equivalences/fibrations are the levelwise weak equivalences/fibrations, and cofibrations are the maps which satisfy the left lifting property with respect to levelwise trivial fibrations.
\end{prop}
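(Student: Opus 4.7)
The plan is to produce the projective model structure via Jeff Smith's recognition theorem for combinatorial model categories, using sets of generating (trivial) cofibrations built from the fibers via ``free section'' left adjoints to the evaluation functors.

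For each $j \in \J$, the evaluation functor $\ev_j: \Sec_\J(\M) \lrar \M(\vphi(j))$, $s \mapsto s(j)$, preserves all colimits: the tensoring $K \otimes (-)$ and each left Quillen functor $\alp_!$ preserve colimits, so the section structure maps~\eqref{e:salp} are compatible with componentwise colimits, and colimits in $\Sec_\J(\M)$ are computed levelwise. Since $\Sec_\J(\M)$ is locally presentable (being expressible as an iterated equalizer of the locally presentable categories $\M(\vphi(j))$, their tensor powers by $\Map^\alp_\J(j,j')$, and the product $\prod_j \M(\vphi(j))$, along accessible functors), the adjoint functor theorem furnishes each $\ev_j$ with a left adjoint $F_j$, given explicitly by
\[
F_j(X)(j') \;=\; \coprod_{\alp: \vphi(j) \to \vphi(j')} \Map^\alp_\J(j,j') \otimes \alp_!(X),
\]
with structure maps induced by simplicial composition in $\J$. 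Choosing generating (trivial) cofibrations $I_j$ and $J_j$ in each combinatorial $\M(\vphi(j))$, one sets $I := \bigcup_j F_j(I_j)$ and $J := \bigcup_j F_j(J_j)$.

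The adjunctions $F_j \dashv \ev_j$ immediately identify maps in $\Sec_\J(\M)$ with right lifting property against $I$ (respectively $J$) as the levelwise trivial fibrations (respectively levelwise fibrations). Moreover, each generator $F_j(f)$ with $f \in J_j$ is \emph{levelwise} a trivial cofibration: the transition functor $\alp_!$ is left Quillen, and tensoring with the simplicial set $\Map^\alp_\J(j,j')$ preserves trivial cofibrations in the simplicial model category $\M(\vphi(j'))$ by the pushout-product axiom. Because colimits in $\Sec_\J(\M)$ are computed levelwise, the class of levelwise trivial cofibrations is closed under pushouts and transfinite compositions, whence $J$-cell $\subseteq W$. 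The class $W$ of levelwise weak equivalences satisfies 2-out-of-3 and is retract-closed, and is accessibly embedded and accessible in the arrow category (each $\M(\vphi(j))$ is combinatorial and the levelwise condition is a small intersection of accessible ones). Smith's theorem then produces a combinatorial model structure with weak equivalences $W$ and generating (trivial) cofibrations $I$ (respectively $J$), whose fibrations and trivial fibrations coincide with the levelwise fibrations and trivial fibrations.

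For the simplicial enrichment, one sets $(K \otimes s)(j) := K \otimes s(j)$ fiberwise; the isomorphism $F_j(K \otimes X) \cong K \otimes F_j(X)$ obtained by adjunction reduces the pushout-product axiom to its fiberwise counterpart, which holds by assumption. The main technical obstacle is the verification that $J$-cell $\subseteq W$, which crucially combines the fiberwise pushout-product axiom (so that tensoring by $\Map^\alp_\J(j,j')$ preserves trivial cofibrations), the left-Quillen nature of each $\alp_!$, and the levelwise computation of colimits in $\Sec_\J(\M)$; the accessibility of $W$ is standard but also deserves some care.
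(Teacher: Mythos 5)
Your proposal is correct and follows essentially the same route as the paper: the same free-section left adjoints to the evaluation functors, the same generating sets $I$ and $J$, and the same key observations that the generators are levelwise (trivial) cofibrations (via the pushout-product axiom and the left Quillen transition functors) and that colimits are computed levelwise. The only difference is that the paper carries out the final verification (factorizations via the small object argument plus the retract argument identifying trivial cofibrations with the saturation of $J$) by hand, whereas you outsource it to the recognition theorem — and note that with both $I$ and $J$ given explicitly and $I$-inj identified with $W \cap J$-inj, the classical recognition theorem suffices and the accessibility of $W$ you invoke for Smith's theorem is not actually needed.
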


\begin{proof}
The proof is completely standard, but we spell out the main details for the convenience of the reader. 
For each $j \in \J$ and $X \in \M(\vphi(j))$ let us denote by $s_{j,X} \in \Sec_{\J}(\M)$ the section given by
\begin{equation}\label{e:free}
s_{j,X}(j') = \coprod_{\alp: \vphi(j) \to \vphi(j')} \Map^{\alp}_{\J}(j,j') \otimes \alp_!X \in \M(\vphi(j'))
\end{equation}
where the coproduct is taken over all maps $\alp: \vphi(j) \lrar \vphi(j')$ in $\I$ and $\Map^{\alp}_{\J}(j,j')$ is as above. We note that for a fixed $j \in \J$ the association $X \mapsto s_{j,X}$ is left adjoint to the evaluation functor $t \mapsto t(j)$. Since evaluation functors preserve all colimits we have that $\Sec_{\J}(\M)$ is monadic over $\prod_{j \in \Ob(J)}\M(j)$ with monad $t \mapsto \oplus_{j \in \J}s_{j,t(j)}$, and in particular presentable as an ordinary category.

Now note that if $X \lrar Y$ is a cofibration (resp.\ trivial cofibration) in $\M(\vphi(j))$ then the induced map
$$ s_{j,X} \lrar s_{j,Y} $$
is a levelwise cofibration (resp.\ trivial cofibration). For each $j \in \J$, let $I_j,J_j$ be sets of generating cofibrations and trivial cofibrations respectively for $\M(\vphi(j))$. Define $I$ to be the union of the images of $I_j$ under the functors $X \mapsto s_{j,X}$, and $J$ to be the union of the images of $J_j$ under the functors $X \mapsto s_{j,X}$.
We then observe the following:
\begin{enumerate}[leftmargin=*]
\item
A map $T: s \lrar t$ is a levelwise fibration if and only if it satisfies the right lifting property with respect to $J$.
\item
A map $T: s \lrar t$ is a levelwise trivial fibration if and only if it satisfies the right lifting property with respect to $I$.
\end{enumerate}
A direct consequence of the above observation is that the desired class of cofibrations in $\Sec^{\proj}_{\J}(\M)$ coincides with the weakly saturated class generated from $I$. The small object argument then gives the factorization of every map into a cofibration followed by a trivial fibration, and we also obtain the lifting property of cofibrations against trivial fibrations.

Let $\ovl{J}$ be the weakly saturated class of morphisms generated from $J$. Then every nap in $\ovl{J}$ is both a cofibration and a levelwise trivial cofibration and so $\ovl{J}$ is contained in the class of trivial cofibrations in $\Sec^{\proj}_{\J}(\M)$. By observation $(1)$ above we can, using the small object argument, factor every map as a map in $\ovl{J}$ followed by a fibration, and we similarly have that every map in $\ovl{J}$ satisfies the left lifting property against fibrations. It will hence suffice to prove that $\ovl{J}$ coincides with the class of trivial cofibrations. 

Let $T: s \lrar t$ be a trivial cofibration. Then we can factor $T$ as $s \x{T'}{\lrar} s' \x{T''}{\lrar} t$ such that $T' \in \ovl{J}$ and $T''$ is a levelwise fibration. Applying the $2$-out-of-$3$ rule we see that $T''$ is a levelwise trivial fibration. Since $T$ is in particular a cofibration we have a lift in the square
$$ \xymatrix{
s \ar^{T'}[r]\ar^{T}[d] & s' \ar^{T''}[d] \\
t \ar^{\Id}[r]\ar@{-->}[ur] & t \\
}$$
This means that $T$ is a retract of $T'$ and so $T \in \ovl{J}$, as desired. This establishes the existence of a combinatorial model structure as required. The existence of the levelwise simplicial structure is readily verified using the explicit set of generating cofibrations (and the fact that each transition left Quillen functor $\alp_!$ is simplicial).
\end{proof}

We shall now establish a few basic properties of the categories $\Sec_{\J}(\M)$. We begin with some terminology.
\begin{define}
Let $\C$ be a simplicial category. We will denote the vertices $f \in \left(\Map_\C(X,Y)\right)_0$ simply as morphisms $f:X \lrar Y$. We will say that two morphisms $f,g : X \lrar Y$ in $\C$ are \textbf{weakly homotopic} if they are in the same connected component of $\Map_\C(X,Y)$. We will say that $f:X \lrar Y$ is a \textbf{weak homotopy equivalence} if there exists a morphism $g: y \lrar X$ such that $f \circ g$ and $g \circ f$ are weakly homotopic to the respective identities. This notion coincides with the notion of equivalence in the $\infty$-category $\Ne\left(\C^{\fib}\right)$ where $\C^{\fib}$ denotes a fibrant replacement for $\C$ and $\Ne(\bullet)$ is the coherent nerve functor. 
\end{define}

\begin{rem}\label{r:weak-homotopy-1}
Let $\C$ be a simplicial model category and $f,g: X \lrar Y$ a pair of weakly homotopic maps. If
\textbf{either} $X$ is cofibrant \textbf{or} $Y$ is fibrant then $f,g$ will have the same image in $\Ho(\C)$. Since any model category is saturated as a relative category, we get that if $f: X \lrar Y$ is a weak homotopy equivalence between fibrant objects (or cofibrant objects) then $f$ is a weak equivalence.
\end{rem}

\begin{rem}\label{r:weak-homotopy-2}
Any functor of simplicial categories $\C \lrar \D$ sends weakly homotopic pairs of maps to weakly homotopic pairs of maps. In particular, if 
$ \xymatrix{ \C \ar^{\Le}@<0.5ex>[r] & \D \ar^{\Ri}@<0.5ex>[l] \\} $
is a simplicial Quillen adjunction then both $\Le$ and $\Ri$ preserve the simplicial enrichment and hence preserve weakly homotopic pairs of maps.
\end{rem}

We now have the following basic lemma:
\begin{lem}\label{l:cartesian}
Keeping the notations above, let $s \in \Sec^{\proj}_{\J}(\M)$ a \textbf{fibrant} object. Let $\gam: j \lrar j'$ be a weak homotopy equivalence in $\J$ and let $\alp = \vphi(\gam)$ be the corresponding map in $\I$. Then the map
$$ s^\gam: s(j) \lrar \alp^*s(j'), $$
adjoint to the map $s_{\gam}:\alp_!s(j) \lrar s(j')$ of~\eqref{e:sbet} is a weak equivalence.
\end{lem}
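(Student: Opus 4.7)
The plan is to reduce the claim to showing that the non-adjoint structure map $s_\gamma:\alpha_!s(j)\to s(j')$ is a weak equivalence in $\M(\varphi(j'))$, and then prove this by exploiting a homotopy inverse $\delta$ to $\gamma$ together with the fibrancy of $s$. First, since $\gamma$ is a weak homotopy equivalence there is some $\delta:j'\to j$ with $\delta\gamma\sim\text{id}_j$ and $\gamma\delta\sim\text{id}_{j'}$, and since $\I$ is mapping-wise discrete any two weakly homotopic maps in $\J$ must map to \emph{equal} maps in $\I$. Thus $\beta:=\varphi(\delta)$ is a two-sided inverse of $\alpha=\varphi(\gamma)$ in $\I$. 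By pseudofunctoriality of $\M$ we have $\beta_!\alpha_!\cong\text{id}$ and $\alpha_!\beta_!\cong\text{id}$, so $\alpha_!$ is an equivalence of categories; uniqueness of adjoints then identifies $\beta_!\cong\alpha^*$, so $\alpha^*$ is simultaneously left and right Quillen (every weak equivalence factors as a trivial cofibration followed by a trivial fibration, so such a functor preserves all weak equivalences). Consequently $\alpha_!\dashv\alpha^*$ is an adjoint equivalence with unit/counit natural weak equivalences, and the assertion that $s^\gamma$ is a weak equivalence is equivalent to the assertion that $s_\gamma$ is.

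Next, since $s:\J\to\int\M$ is a simplicial functor and $\delta\gamma\sim\text{id}_j$ in $\Map^{\text{id}}_\J(j,j)$, the pair $s_{\delta\gamma}$ and $s_{\text{id}_j}=\text{id}_{s(j)}$ are weakly homotopic in the identity component of $\Map_{\int\M}((j,s(j)),(j,s(j)))$, which by \eqref{e:simp} is precisely $\Map_{\M(\varphi(j))}(s(j),s(j))$. Since $s(j)$ is fibrant, Remark~\ref{r:weak-homotopy-1} forces $s_{\delta\gamma}$ to be a weak equivalence in $\M(\varphi(j))$. The symmetric argument shows that $s_{\gamma\delta}$ is a weak equivalence in $\M(\varphi(j'))$.

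Functoriality of the simplicial section $s$, combined with the pseudofunctoriality of $\M$, yields the composition formulas
\[
s_{\delta\gamma}\;=\;s_\delta\circ\beta_!(s_\gamma),\qquad s_{\gamma\delta}\;=\;s_\gamma\circ\alpha_!(s_\delta),
\]
where we have tacitly inserted the coherence isomorphisms $(\beta\alpha)_!\cong\beta_!\alpha_!$ and $(\alpha\beta)_!\cong\alpha_!\beta_!$. The first formula, together with the previous paragraph, shows that $\beta_!(s_\gamma)$ admits $s_\delta$ as a one-sided inverse in $\Ho(\M(\varphi(j)))$. Applying $\beta_!$ (which preserves weak equivalences by Step~1) to the second formula, and using $\beta_!\alpha_!\cong\text{id}$, gives that $\beta_!(s_\gamma)\circ s_\delta$ is also a weak equivalence, producing the other one-sided inverse. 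Hence $\beta_!(s_\gamma)$ is invertible in $\Ho(\M(\varphi(j)))$, so a weak equivalence by saturation, and applying $\alpha_!$ then shows $s_\gamma\cong\alpha_!\beta_!(s_\gamma)$ is a weak equivalence as desired.

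The main obstacle is purely bookkeeping of the pseudofunctor coherences: one has to keep careful track of the isomorphisms implicit in $(\beta\alpha)_!\cong\beta_!\alpha_!$ and $\alpha_!\beta_!\cong\text{id}$ both when writing down the composition formulas and when transporting weak equivalences via $\alpha_!$ and $\beta_!$. Conceptually the proof is straightforward once one observes that the weak-homotopy-invertibility of $\gamma$ forces $\alpha$ to be a genuine isomorphism in $\I$, whereupon the entire question collapses to homotopy invertibility of a single map between fibrant objects in one model category.
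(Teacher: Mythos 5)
Your proof is correct and follows essentially the same route as the paper: both hinge on choosing a weak homotopy inverse $\delta$ of $\gamma$, observing that $\alpha$ is then an isomorphism of $\I$ so that $\alpha_!\dashv\alpha^*$ is an (adjoint) equivalence of model categories preserving all weak equivalences, and then combining Remarks~\ref{r:weak-homotopy-1} and~\ref{r:weak-homotopy-2} with the fibrancy of $s$. The only cosmetic difference is that the paper argues directly that $s^\gamma$ is a weak homotopy equivalence between fibrant objects, exhibiting $\alpha^*s^\delta$ as an explicit weak homotopy inverse, whereas you transfer the question to the non-adjoint map $s_\gamma$ and run a two-sided-inverse argument in the homotopy category; both are valid.
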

\begin{proof}
Since both $s(j)$ and $\alp^*s(j')$ are fibrant it will be enough to prove that $s^\gam$ is a weak homotopy equivalence (see Remark~\ref{r:weak-homotopy-1}). Since $\gam$ is a weak homotopy equivalence there exists a $\del: j' \lrar j$ such that $\del \circ \gam$ and $\gam \circ \del$ are weakly homotopic to the corresponding identity maps. In particular, $\bet = \vphi(\del)$ is an inverse for $\alp$ and so both $\alp$ and $\bet$ are isomorphisms. This implies that $\alp_! \dashv \alp^*$ and $\bet_! \dashv \bet^*$ are Quillen equivalences such that both their compositions are (naturally isomorphic to) the identity Quillen equivalence. Now let
$$ s^\del: s(j') \lrar \bet^*s(j) $$
be the adjoint to the map $s_{\del}:\bet_!s(j') \lrar s(j)$ determined by the vertex $\del \in \Map^{\bet}_{\J}(j',j)$. Since $\del \circ \gam:j \lrar j$ and $\gam \circ \del:j' \lrar j'$ are weakly homotopic to the respective identity maps we get from Remark~\ref{r:weak-homotopy-2} 
that the compositions
\begin{equation}\label{e:comp-1} 
s^{\del\circ \gam}:s(j) \x{s^\gam}{\lrar} \alp^*s(j') \x{\alp^*s^\del}{\lrar} \alp^*\bet^*s(j) \cong s(j) 
\end{equation}
and
\begin{equation}\label{e:comp-2} 
s^{\gam \circ \del}:s(j') \x{s^{\del}}{\lrar} \bet^*s(j) \x{\bet^*s^{\gam}}{\lrar} \bet^*\alp^*s(j') \cong s(j')
\end{equation}
are also weakly homotopic to the respective identities. Applying $\alp^*$ to~\eqref{e:comp-2} and using again Remark~\ref{r:weak-homotopy-2} we may now conclude that $\alp^*s^{\del}: \alp^*s(j') \lrar \alp^*\bet^*s(j) \cong s(j)$ is a weak homotopy inverse to $s^{\gam}$, and so $s^\gam$ is a weak homotopy equivalence, as desired.
\end{proof}

With $\vphi: \J \lrar \I$ as above, consider now a diagram of simplicial categories of the form
$$ \xymatrix{
\J \ar_-{\psi}[d]\ar^-{s}[r] & \int\M  \ar[d] \\
\J' \ar^-{\vphi'}[r] & \I \\
}$$
such that $\vphi = \vphi'\circ \psi$.
The \textbf{enriched relative left Kan extension} $\psi_{!}s: \J' \lrar \int\M$ of $s$ is the coequilizer
$$ \psi_{!}s(j') = \coeq\left[\coprod_{{{{j_1,j_2 \in \J} \atop {\bet: \vphi(j_1) \to \vphi(j_2)}} \atop {\gam: \vphi(j_2) \to \vphi'(j')}}}\left[\Map^{\bet}_{\J}(j_1,j_2) \times \Map^{\gam}_{\J'}(\psi(j_2),j')\otimes\gam_!\bet_!s(j_1) \right] \rightrightarrows \right.$$
$$ \left.\coprod_{{{j \in \J} \atop {\alp: \vphi(j) \to \vphi'(j')}}} \Map^{\alp}_{\J'}(\psi(j),j') \otimes \alp_!s(j)\right].
$$ 
We note that the functor $\psi_!: \Sec_{\J}(\M) \lrar \Sec_{\J'}(\M)$ is left adjoint to the restriction functor $\psi^*: \Sec_{\J'}(\M) \lrar \Sec_{\J}(\M)$. In particular, for every $s:\J \lrar \int\M$ and $t: \J' \lrar \int\M$ one has a canonical isomorphism
$$ \Map_{\Sec_{\J'}(\M)}\left(\psi_{!}s,t\right) \cong \Map_{\Sec_{\J}(\M)}\left(s,\psi^*t\right). $$
In addition, $\psi_!$ preserves the ``free sections''~\eqref{e:free} in the sense that one has a canonical isomorphism 
\begin{equation}\label{e:free-2} 
\psi_{!}s_{j,X} \cong s_{\psi(j),X} 
\end{equation}
for every $j \in \J$ and $X \in \M(j)$.
The following proposition is a generalization of~\cite[A.3.3.8]{higher-topos}:
\begin{prop}\label{p:change-J}
Let
$$ \xymatrix{
\J \ar^{\psi}[rr]\ar[dr] && \J' \ar[dl] \\
& \I & \\
}$$
be a map of simplicial categories over the ordinary category $\I$. Then the adjunction  
$$ \xymatrix{
\Sec^{\proj}_{\J}(\M) \ar^{\psi_{!}}@<0.5ex>[r] & \Sec^{\proj}_{\J'}(\M) \ar^{\psi^{\ast}}@<0.5ex>[l] \\
}.$$
is a Quillen adjunction. Furthermore, if $\psi$ is an equivalence of simplicial categories then this adjunction is a \textbf{Quillen equivalence}.
\end{prop}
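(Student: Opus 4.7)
The Quillen adjunction is immediate: weak equivalences and fibrations in both $\Sec^{\proj}_{\J}(\M)$ and $\Sec^{\proj}_{\J'}(\M)$ are defined levelwise, and $\psi^*$ commutes with evaluation via $(\psi^* T)(j) = T(\psi(j))$, so it preserves them and is right Quillen. For the Quillen equivalence, assume $\psi$ is a DK-equivalence. Since weak equivalences are levelwise, $\psi^*$ preserves all weak equivalences, and by a standard criterion it then suffices to show: (a) $\psi^*$ reflects weak equivalences between fibrant objects; and (b) for every cofibrant $s \in \Sec^{\proj}_{\J}(\M)$, the ordinary unit $\eta_s : s \to \psi^* \psi_! s$ is a weak equivalence.

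For (a), I would take $T : s \to t$ between fibrant objects of $\Sec^{\proj}_{\J'}(\M)$ with $\psi^* T$ a weak equivalence, and let $j' \in \J'$. Essential surjectivity of $\psi$ produces $j \in \J$ together with a weak homotopy equivalence $\gam : \psi(j) \to j'$ in $\J'$. Its image $\alp := \vphi'(\gam)$ in the discrete category $\I$ is a weak homotopy equivalence (Remark~\ref{r:weak-homotopy-2}), hence an isomorphism; thus $\alp^*$ is a Quillen equivalence that reflects weak equivalences between fibrant objects. Applying Lemma~\ref{l:cartesian} to $s, t$ with respect to $\gam$ makes the horizontal maps in
$$
\xymatrix{
s(\psi(j)) \ar[r]^-{s^\gam}\ar[d]_-{T(\psi(j))} & \alp^* s(j') \ar[d]^-{\alp^* T(j')} \\
t(\psi(j)) \ar[r]^-{t^\gam} & \alp^* t(j') \\
}
$$
into weak equivalences; the left vertical is a weak equivalence by hypothesis, so by two-out-of-three the right vertical is too, and then reflection by $\alp^*$ gives that $T(j')$ is a weak equivalence.

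For (b), I would first verify the free case. For cofibrant $X \in \M(\vphi(j))$, the section $s_{j,X}$ is cofibrant and~\eqref{e:free-2} gives $\psi_! s_{j,X} = s_{\psi(j),X}$, so $\eta_{s_{j,X}}$ at level $j'' \in \J$ decomposes as a coproduct over $\alp \in \Hom_{\I}(\vphi(j), \vphi(j''))$ of maps
$$
\Map^\alp_{\J}(j, j'') \otimes \alp_! X \;\lrar\; \Map^\alp_{\J'}(\psi(j), \psi(j'')) \otimes \alp_! X
$$
induced by $\psi$. Since $\psi$ is a DK-equivalence the full map $\Map_{\J}(j,j'') \to \Map_{\J'}(\psi(j),\psi(j''))$ is a weak equivalence of simplicial sets, and both sides split as disjoint unions indexed by the same set $\Hom_{\I}(\vphi(j), \vphi(j''))$ of matching collections of connected components, so each restriction $\Map^\alp_{\J} \to \Map^\alp_{\J'}$ is a weak equivalence; tensoring with the cofibrant $\alp_! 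X$ (by SM7) shows $\eta_{s_{j,X}}$ is a levelwise weak equivalence. For general cofibrant $s$, present $s$ as a retract of a transfinite cell complex on the generating cofibrations $s_{j,X} \to s_{j,Y}$; since both $\psi_!$ and $\psi^*$ preserve all colimits, $\eta$ commutes with pushouts and sequential colimits, and a gluing-lemma/transfinite-induction argument applied levelwise in each $\M(\vphi(j''))$—using that projective cofibrations are levelwise cofibrations, so the comparison corners remain cofibrant at every level—propagates the free-section case through the cell filtration. The main obstacle is precisely this passage from free to general cofibrant sections: one must set up the gluing argument so as to avoid hypothesising global left properness on $\Sec^{\proj}_{\J}(\M)$, by reducing everything to the standard cube lemma inside each ambient simplicial model category $\M(\vphi(j''))$.
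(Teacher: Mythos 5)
The Quillen adjunction part and your step (a) are correct and match the paper's argument (the paper handles (a) more tersely, citing Lemma~\ref{l:cartesian} together with essential surjectivity, but the content is the same as your square). The genuine gap is in step (b), and it sits exactly where you flag "the main obstacle." Two problems. First, your free-cell computation assumes $X$ cofibrant, but the generating cofibrations $X \hrar Y$ of a combinatorial model category need not have cofibrant domains; for non-cofibrant $\alp_!X$ the map $\Map^{\alp}_{\J}(j,j'')\otimes \alp_!X \lrar \Map^{\alp}_{\J'}(\psi(j),\psi(j''))\otimes\alp_!X$ is not covered by SM7 (the relevant pushout-product involves $\emptyset \to \alp_!X$, which is only a cofibration when $\alp_!X$ is cofibrant), so you cannot conclude it is a weak equivalence. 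Second, for the same reason the levelwise cube lemma fails: the pushout squares attaching a cell $s_{j,X}\to s_{j,Y}$ are not Reedy cofibrant at each level unless $X$ is cofibrant, and you have ruled out appealing to left properness. So the induction through the cell filtration does not close as stated.

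The paper resolves both issues with two devices you are missing. (i) It first reduces to the case where $\psi$ is a \emph{local trivial cofibration over $\I$}, i.e.\ each $\Map^{\alp}_{\J}(j,k)\hrar\Map^{\alp}_{\J'}(\psi(j),\psi(k))$ is a trivial \emph{cofibration} of simplicial sets, not merely a weak equivalence; this is achieved by factoring $\J\coprod\J'\lrar\J'$ as a cofibration followed by a trivial fibration and using that Quillen equivalences satisfy two-out-of-three. (ii) Instead of proving "the unit is a levelwise weak equivalence" cell by cell and gluing, it follows \cite[A.3.3.8]{higher-topos}: call $T\colon s\to t$ \emph{good} if each pushout corner map $t(k)\coprod_{s(k)}\psi^*\psi_!s(k)\lrar\psi^*\psi_!t(k)$ is a trivial cofibration. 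This class is weakly saturated, so one only checks generating cofibrations, where the corner map is precisely the pushout-product of the trivial cofibration $\Map^{\alp}_{\J}(j,k)\hrar\Map^{\alp}_{\J'}(\psi(j),\psi(k))$ with the cofibration $\alp_!X\hrar\alp_!Y$ — a trivial cofibration by SM7 with no cofibrancy needed on $X$, and no gluing lemma needed anywhere. If you incorporate (i) and replace your cell induction by (ii), your argument becomes the paper's proof.
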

\begin{proof}
The fact that $\psi^*$ is a right Quillen functor is immediate since fibrations and trivial fibrations are defined levelwise. The main part is checking that $\psi_!\dashv \psi^*$ is a Quillen equivalence when $\psi$ is a weak equivalence. Let us say that $\psi$ is a \textbf{local trivial cofibration over $\I$} if for every $j,k \in \J$ and $\alp: \vphi(j) \lrar \vphi(k)$ in $\I$, the induced map
$$ \Map^{\alp}_{\J}(j,k) \hrar \Map^{\alp}_{\J'}(\psi(j),\psi(k)) $$
is a trivial cofibration of simplicial sets. 
As in the proof of ~\cite[A.3.3.8]{higher-topos}, we begin by reducing to the case where $\psi$ is a local trivial cofibration over $\I$. To perform this reduction, factor the induced map $\J \coprod \J' \lrar \J'$ as a cofibration of simplicial categories $\kappa \coprod \sig:\J \coprod \J' \lrar \J''$ followed by a trivial fibration $\pi:\J'' \lrar \J'$. Note that by construction the map $\pi$ is equipped with a section $\sig: \J' \lrar \J''$.
We obtain a commutative diagram
$$ \xymatrix{
\J \ar^{\kappa}[rr]\ar^{\psi}[dr] & & \J'' \ar_{\pi}[dl] \\
& \J' \ar_{\sig}@/_1.5pc/[ur] & \\
}$$
of simplicial categories over $\I$, and by the $2$-out-of-$3$ rule one can deduce that all maps appearing in this diagram are weak equivalences. Since Quillen equivalences are closed under $2$-out-of-$3$, it will be enough to prove the theorem for $\kappa$ and $\sig$. But $\kappa$ and $\sig$ are both local trivial cofibrations over $\I$. Hence we can assume without loss of generality that $\psi$ is a local trivial cofibration over $\I$.
 
Our next step is to observe that the functor 
$$ \psi^*: \Sec^{\proj}_{\J'}(\M) \lrar \Sec^{\proj}_{\J}(\M) $$
preserves all weak equivalences, and, in view of Lemma~\ref{l:cartesian} and the assumption that $\psi$ is a weak equivalence (and in particular essentially surjective), also detects weak equivalences between fibrant objects. It is hence enough to show that for every cofibrant object $s \in \Sec^{\proj}_{\J}(\M)$ the unit map
$$ s \lrar \psi^*\psi_{!}s $$
is a weak equivalence. Proceeding as in the proof of Proposition A.3.3.8 of~\cite{higher-topos}, we will say that a map $T: s \lrar t$ in $\Sec^{\proj}_{\J}(\M)$ is \textbf{good} if for every $k \in \J$ the induced map
$$ t(k) \coprod_{s(k)} \psi^*\psi_{!}s(k) \lrar \psi^*\psi_{!}t(k) $$
is a trivial cofibration in $\M(\vphi(k))$. It will then be enough to prove that every cofibration is good. We note that the collection of all good maps is weakly saturated and so it will suffice to prove that every generating cofibration is good. Let $j \in \J$ be an object and $X \hrar Y$ a generating cofibration of $\M(\vphi(j))$. We wish to show that the map $s_{j,X} \hrar s_{j,Y}$ in $\Sec_{\J}(\M)$ is good. In light of~\eqref{e:free-2}, what we need to check is that for every $k \in \J$ and every $\alp: \vphi(j) \lrar \vphi(k)$ the induced map
$$ \left[\Map^{\alp}_\J(j,k) \otimes \alp_!Y\right] \coprod_{\Map^{\alp}_\J(j,k) \otimes \alp_!X} \left[\Map^{\alp}_{\J'}(\psi(j),\psi(k)) \otimes \alp_!X\right] \lrar \Map^{\alp}_{\J'}(\psi(j),\psi(k)) \otimes \alp_!Y $$
is a trivial cofibration in $\M(\vphi(k))$. But this follows from the pushout-product axiom for the simplicial structure on $\M(\vphi(k))$ since $\alp_!X \hrar \alp_!Y$ is a cofibration in $\M(\vphi(k))$ and $\Map^{\alp}_\J(j,k) \lrar \Map^{\alp}_{\J'}(\psi(j),\psi(k))$ is assumed to be a trivial cofibration of simplicial sets (that is, $\psi$ is assumed to be a local trivial cofibration).
\end{proof}

\section{Proof of the main theorem}\label{s:proof}

In this final section we will formulate and prove our main theorem in the somewhat more flexible setting of \S\ref{s:base}. 
We hence fix an ordinary category $\I$, a simplicial Quillen presheaf $\M:\I \lrar \ModCat_{\Del}$ valued in combinatorial model categories, and a map of simplicial categories $\vphi:\J \lrar \I$. We will denote by $\Sec^{\circ}_{\J}(\M) \subseteq \Sec_{\J}(\M)$ the full subcategory spanned by the objects which are fibrant and cofibrant with respect to the projective model structure of Proposition~\ref{p:model}.

Let $\left(\Set^{+}_{\Del}\right)_{/\Ne(\I)}$ denote the category of marked simplicial sets over $\Ne(\I)$ (whose objects consist of marked simplicial sets $(X,E)$ equipped with an unmarked map $X \lrar \Ne(\I)$). Given a Cartesian fibration $q:X \lrar \Ne(\I)$ we will denote by $X^{\natural}$ the marked simplicial set whose underlying simplicial set is $X$ is whose marked edges are the $q$-Cartesian edges. We may then naturally consider $X^{\natural}$ as an object of $\left(\Set^{+}_{\Del}\right)_{/\Ne(\I)}$. Following Lurie, we will endow $\left(\Set^{+}_{\Del}\right)_{/\Ne(\I)}$ with the \textbf{Cartesian model structure}, in which the fibrant objects are precisely those of the form $X^{\natural}$ for some Cartersian fibration $X \lrar \Ne(\I)$ \cite[Proposition 3.1.3.7, Proposition 3.1.4.1]{higher-topos}. In light of Corollary~\ref{c:Cartesian} we may then view $\fG^{\natural} = (\Ne(\int^{\circ}\M))^{\natural}$ as a fibrant object in $\left(\Set^{+}_{\Del}\right)_{/\Ne(\I)}$. We note that this is slightly abusive since $\fG^{\natural}$ is a \textbf{large} marked simplicial set. We will address this subtlety more carefully below.

The Cartesian model structure $\left(\Set^{+}_{\Del}\right)_{/\Ne(\I)}$ is tensored and cotensored over the category of marked simplicial sets $\Set^{+}_{\Del}$ endowed the marked categorical model structure (that is, the Cartesian model structure over the point). We note that with this model structure the category of marked simplicial sets presents the $\infty$-category $\Cat_\infty$, and the functor which forgets the marked edges provides a right Quillen equivalence to the Joyal model structure on simplicial sets.  
Given two objects $X,Y \in \left(\Set^{+}_{\Del}\right)_{/\Ne(\I)}$ such that $Y$ is fibrant we have a fibrant mapping object
$ \Map^+_{\Ne(\I)}(X,Y) \in \Set^{+}_{\Del}$. Adapting the notation of~\cite[\S 3.1.3]{higher-topos}, we will denote by $\Map^{\flat}_{\Ne(\I)}(X,Y)$ the underlying simplicial set of $\Map^+_{\Ne(\I)}(X,Y)$, which is an $\infty$-category (see~\cite[Remark 3.1.3.1]{higher-topos}). We note that the simplicial set $\Map^{\flat}_{\Ne(\I)}(X,Y)$ is determined by the ``exponential rule''
\begin{equation}\label{e:exp-rule} 
\Hom_{\Set_\Del}(K,\Map^{\flat}_{\Ne(\I)}(X,Y)) \cong \Hom_{(\Set^+_\Del)_{/\Ne(\I)}}(K^{\flat} \times X,Y),
\end{equation}
where $K^{\flat}$ denotes the marked simplicial set $K$ with only degenerate edges marked.

Now suppose we are given a simplicial set $X$ and a commutative diagram
$$ \xymatrix{
\fC(X) \ar[dr]\ar_{\simeq}^{\psi}[rr] && \J \ar[dl] \\
& \I & \\
}$$
of simplicial categories in which $\psi$ is a weak equivalence. For example, if $\J$ is fibrant then we may take $X$ to be $\Ne(\J)$ and $\psi$ to be the counit map. To this data we may associate a diagram of simplicial categories
$$
\xymatrix{
\fC(X) \times \Sec^{\circ}_{\J}\left(\M\right) \ar[r]^-{\simeq}\ar[dr] & \J \times \Sec^{\circ}_{\J}\left(\M\right) \ar[d]\ar[r] &  \int^{\circ}\M \ar[dl] \\
& \I & \\
}$$
in which the right horizontal arrow is the evaluation map $(j,s) \mapsto s(j)$. Passing to coherent nerves and pre-composing with the unit map $X \mapsto \Ne(\fC(X))$ we obtain a diagram of simplicial sets
\begin{equation}\label{e:tri-nerve}
\xymatrix{
X \times \Ne\left(\Sec^{\circ}_{\J}\left(\M\right)\right) \ar[rr]\ar[dr] &&  \fG \ar[dl] \\
& \Ne(\I) & \\
}\end{equation}
which, in light of Lemma~\ref{l:Cartesian-edge}, refines to a map
\begin{equation}\label{e:uphi} 
u_{\psi}: X^{\flat} \times \Ne^+\left(\Sec^{\circ}_{\J}(\M)\right) \lrar \fG^{\natural}
\end{equation} 
in the model category $\left(\Set^{+}_{\Del}\right)_{/\Ne(\I)}$.
Here, the marked simplicial set 
$\Ne^+\left(\Sec^{\circ}_{\J}(\M)\right)$ is the coherent nerve of $\Sec^{\circ}_{\J}(\M)$ considered as a marked simplicial set in which the marked edges are those which correspond to equivalences in $\Sec^{\circ}_{\J}(\M)$. 
The core part of our main theorem is then given by the following assertion:
\begin{thm}\label{t:limit2}
Keeping the assumptions and notations above, the map 
\begin{equation}\label{e:toprove} 
v_{\psi}:\Ne\left(\Sec^{\circ}_{\J}\left(\M\right)\right) \x{\simeq}{\lrar} \Map^{\flat}_{\Ne(\I)}\left(X^{\flat},\fG^{\natural}\right), 
\end{equation}
adjoint to~\eqref{e:uphi} by~\eqref{e:exp-rule}, is an equivalence of $\infty$-categories.
\end{thm}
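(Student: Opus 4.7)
The plan is to establish~\eqref{e:toprove} by viewing both its source and target as functors of $X \in (\Set_\Del)_{/\Ne(\I)}$, showing each converts suitable homotopy colimits in $X$ into homotopy limits in $\Cat_\infty$, and then verifying the resulting statement on the generating class of simplices $\Del^n \to \Ne(\I)$.

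As a preliminary step, the right-hand side of~\eqref{e:toprove} depends only on $X$, while by Proposition~\ref{p:change-J} the left-hand side is invariant (up to a Quillen equivalence, hence an equivalence of underlying $\infty$-categories) under the weak equivalence $\psi: \fC(X) \to \J$. Hence we may assume $\J = \fC(X)$ and $\psi = \Id_{\fC(X)}$, and view the theorem as asserting that an $X$-indexed natural transformation between two functors of simplicial sets over $\Ne(\I)$ is pointwise an equivalence.

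Next I would verify that both sides of~\eqref{e:toprove} convert homotopy colimits in $X$ to homotopy limits in $\Cat_\infty$. The right-hand side does so because $\fG^\natural$ is fibrant in the Cartesian model structure and the marked mapping object $\Map^{\flat}_{\Ne(\I)}(-, \fG^{\natural})$ is a right adjoint in $X$. For the left-hand side, $\fC$ preserves colimits of simplicial sets, so $\Sec_{\fC(-)}(\M)$ strictly turns colimits of simplicial sets into limits of simplicial categories. Combined with the fact that the projective model structure on such a limit is itself a strict pullback of projective model structures (its fibrations and weak equivalences being detected levelwise, cf.\ the construction in Proposition~\ref{p:model}), and that the coherent nerve is a right Quillen equivalence between the Bergner and Joyal model structures, one concludes that $X \mapsto \Ne(\Sec^{\circ}_{\fC(X)}(\M))$ also takes homotopy pushouts and transfinite compositions in $(\Set_\Del)_{/\Ne(\I)}$ to homotopy pullbacks in $\Cat_\infty$.

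Using the skeletal filtration $X = \colim_n \sk_n X$ and standard cell-attaching pushouts, the preceding compatibility reduces the theorem to the case $X = \Del^n$ for some composable chain $\sigma: [n] \to \I$. In this case both sides specialize to explicit models for the oplax limit of the composable sequence of right-derived Quillen functors $\RR\alp_0^*, \ldots, \RR\alp_{n-1}^*$: the right-hand side is the $\infty$-category of sections of the Cartesian fibration $\fG \times_{\Ne(\I)} \Del^n \to \Del^n$, while the left-hand side is the coherent nerve of the simplicial category of fibrant-cofibrant sections along $\fC(\Del^n)$. The main obstacle is identifying these two presentations, which amounts to matching the cubical composition data packaged into $\fC(\Del^n)$ with the simplicial enrichment~\eqref{e:simp} on $\int^{\circ}\M$. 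This identification is essentially tautological for $n \le 1$: for $n=0$ the right-hand side is the fiber $\Ne(\M(i)^{\circ})$ and the left-hand side is $\Ne(\Sec^{\circ}_{[0]}(\M)) = \Ne(\M(i)^{\circ})$; for $n=1$ one explicitly compares both descriptions of a morphism in $\fG$. For larger $n$ one can either unwind the definition of $\fC(\Del^n)$ on mapping spaces (the nerves of posets of subsets of $[0,n]$) and match it with the iterated enrichment of $\int^\circ\M$ by hand, or else bootstrap from the base cases by applying the already-established colimit-preservation property to the inner horn inclusions $\Lam^n_k \hookrightarrow \Del^n$, since both sides send such an inclusion to a trivial fibration in $\Cat_\infty$.
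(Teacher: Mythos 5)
Your strategy is genuinely different from the paper's (which reduces, via a set-theoretic detection lemma, to a bijection on $\pi_0$ of objects after exponentiating by an arbitrary small $K$, and then proves surjectivity and injectivity by explicit cofibrant replacement and strictification of homotopies using Proposition~\ref{p:change-J}). Unfortunately, your descent argument has a gap at its central step: the claim that $X \mapsto \Ne\bigl(\Sec^{\circ}_{\fC(X)}(\M)\bigr)$ carries homotopy pushouts (and the skeletal tower) of simplicial sets over $\Ne(\I)$ to homotopy pullbacks in $\Cat_\infty$. What you actually justify is that the \emph{$1$-categorical} limit is strict and that the projective model structure on it has levelwise weak equivalences and fibrations. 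That does not imply that $\Ne\bigl((-)^{\circ}\bigr)$ commutes with the limit up to homotopy. Concretely, for a pushout $X = B \cup_A A'$ an object of the strict pullback $\Sec_{\fC(B)}(\M) \times_{\Sec_{\fC(A)}(\M)} \Sec_{\fC(A')}(\M)$ consists of two sections whose restrictions to $A$ are \emph{equal}, whereas an object of the homotopy pullback of the coherent nerves consists of two sections together with an \emph{equivalence} of their restrictions. Showing that every such homotopy-coherent datum can be rectified to a strict one (and the analogous statement for mapping spaces, needed for full faithfulness) is a strictification problem of essentially the same nature and difficulty as Theorem~\ref{t:limit2} itself; one would at minimum need to prove that restriction along a cofibration $A \hookrightarrow B$ induces an isofibration on nerves of fibrant-cofibrant objects whose strict fibers compute the homotopy fibers. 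Nothing in Proposition~\ref{p:model} or~\ref{p:change-J} provides this, so as written the reduction to $X = \Del^n$ is unsupported (and partly circular, since your proposed bootstrap for $n \ge 2$ via inner horns invokes the same unproven descent property).

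A secondary point: even granting descent, the base case $X = \Del^n$ for $n \ge 2$ is not routine. The mapping spaces of $\fC(\Del^n)$ are nerves of cube-like posets, and matching a simplicial functor $\fC(\Del^n) \lrar \int^{\circ}\M$ against an $n$-simplex of the coherent nerve is exactly the adjunction $\fC \dashv \Ne$, so the content there is fine; but identifying when the comparison is an \emph{equivalence of $\infty$-categories} (essential surjectivity and full faithfulness over $\Del^n$) still requires the cofibrant-replacement and homotopy-rectification arguments that the paper carries out directly for general $X$. If you want to salvage the descent route, you would need to first establish the isofibration/strict-versus-homotopy-pullback statement above; the paper's route via Lemma~\ref{l:set-theory}, \cite[Proposition 4.2.4.4]{higher-topos}, and Proposition~\ref{p:change-J} avoids this entirely by never forming limits of section categories.
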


We pause to note that the map~\eqref{e:toprove} is a map of \textbf{large} $\infty$-categories (though they are both locally small), and so some caution is required. In what follows we will use the following terminology: for a (possibly large) $\infty$-category $\C$ we will denote by $\pi_0\C$ the collection of equivalence classes of objects of $\C$. We note that if $\C$ is locally small and $K$ is a small simplicial set then the $\infty$-category $\C^K$ of functors from $X$ to $\C$ is locally small as well. To accommodate the size issue we will need the following lemma:

\begin{lem}\label{l:set-theory}
Let $\F: \C \lrar \D$ be a functor between possibly large locally small $\infty$-categories. Suppose that for every small simplicial set $K$ the induced map 
$$ \pi_0\C^K \lrar \pi_0\D^K $$ 
is bijective. Then $\F$ is an equivalence.
\end{lem}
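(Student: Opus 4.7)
The plan is to apply Yoneda in the homotopy category of the Joyal model structure, which characterizes equivalences of $\infty$-categories by bijections on homotopy classes of maps.

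First I identify $\pi_0 \C^K$ with $[K,\C]_{\mathrm{Joyal}}$, the set of equivalence classes of functors $K \to \C$ (equivalently, $\pi_0$ of the core of $\C^K$); the hypothesis then reads as bijections $[K,\C] \cong [K,\D]$ for every small simplicial set $K$. Next I extend this to arbitrary simplicial sets $K$, in particular to $K = \C$ and $K = \D$. Writing $K$ as the filtered colimit $K = \mathrm{colim}_\alpha K_\alpha$ of its finite sub-simplicial sets with monomorphism inclusions (which are cofibrations in the Joyal model structure), and using that the enriched Hom functor $[-,\C]$ sends such cofibrant filtered colimits to filtered limits of sets, one obtains $[K,\C] = \lim_\alpha [K_\alpha, \C]$ and similarly for $\D$. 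Since filtered limits of sets preserve bijections, the extended statement $[K,\C] \cong [K,\D]$ follows for all $K$.

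Finally, I invoke Yoneda. Taking $K = \D$, the identity $\mathrm{id}_\D \in [\D,\D]$ corresponds via the bijection $[\D,\C] \cong [\D,\D]$ to some morphism $G \in [\D, \C]$ with $\F \circ G = \mathrm{id}_\D$ in the Joyal homotopy category. Taking $K = \C$, the bijection $[\C, \C] \cong [\C, \D]$ together with the identities $\F \circ (G \circ \F) = (\F \circ G) \circ \F = \F = \F \circ \mathrm{id}_\C$ forces $G \circ \F = \mathrm{id}_\C$. Hence $\F$ has a two-sided inverse in the Joyal homotopy category, and is therefore an equivalence of $\infty$-categories.

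The main obstacle is justifying the filtered-colimit identification $[K,\C] = \lim_\alpha [K_\alpha, \C]$ at the level of equivalence classes: naively, $\pi_0$ of a cofiltered limit can differ from the limit of $\pi_0$'s due to $\lim^1$-type obstructions. This is handled by working at the level of (homotopy) mapping spaces before passing to $\pi_0$, using that the transition maps $\C^{K_\beta} \to \C^{K_\alpha}$ are Joyal fibrations whose cores are Kan fibrations, together with the observation that the hypothesis simultaneously controls higher-homotopy information via the small simplicial sets $K_\alpha \times S^n$, allowing a bootstrap up the Postnikov ladder that rules out any Milnor-type obstructions. A careful universe bookkeeping (enlarging so $\C, \D$ become small) justifies applying Yoneda to test objects $K = \C, \D$ in the first place.
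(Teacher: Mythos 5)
There is a genuine gap, and it is located exactly where you flag it: the claim that $[K,\C]\cong\lim_\alpha[K_\alpha,\C]$ when $K$ is a large simplicial set written as the filtered colimit of its small subcomplexes. Passing to homotopy classes does not commute with (co)filtered (co)limits: a compatible system of homotopy classes $[K_\alpha\to\C]$ need not glue, and when it does the gluing need not be unique (phantom maps, $\lim^1$-obstructions). Your proposed repair --- controlling higher homotopy via the test objects $K_\alpha\times S^n$ and ``bootstrapping up the Postnikov ladder'' --- does not eliminate these obstructions: even complete levelwise control of all homotopy groups of a large cofiltered system of Kan fibrations does not determine $\pi_0$ of the limit. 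Decisively, your argument nowhere uses the hypothesis that $\C$ and $\D$ are \emph{locally small}, so if it worked it would prove the lemma for arbitrary large $\infty$-categories; the paper's remark following the lemma explains that this stronger statement is \emph{false}, since it would imply that the corepresentable functors on $\Ho(\Cat_\infty)$ (hence on $\Ho(\cS)$) associated to a set of small objects are jointly conservative, contradicting Heller's theorem. The universe-enlargement aside does not rescue this: after enlarging the universe the hypothesis would need to hold for all $K$ small in the \emph{new} universe, which is not what is given.

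The local smallness hypothesis must therefore enter the argument in an essential way, and the paper's proof shows how: essential surjectivity is immediate from $K=\Del^0$, and for full faithfulness one fixes $X,Y\in\C$, uses local smallness to choose a \emph{small} full subcategory $\U\subseteq\D$ containing $\F(X),\F(Y)$ and a small full subcategory $\V\subseteq\F^{-1}\U$ containing $X,Y$ with $\V\hookrightarrow\F^{-1}\U$ an equivalence. The resulting square with vertical fully faithful inclusions is homotopy Cartesian, and a diagram chase shows that $\pi_0\V^K\lrar\pi_0\U^K$ is bijective for every small $K$. Since $\V$ and $\U$ are themselves small, one may now legitimately run your Yoneda argument (with test objects $K=\V,\U$) to conclude that $\V\lrar\U$ is an equivalence, whence $\Map_\C(X,Y)\simeq\Map_\D(\F X,\F Y)$. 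In short: your endgame is fine, but it can only be played after reducing to small categories, and that reduction is the actual content of the lemma.
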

\begin{proof}
Applying the assumption for $K=\Del^0$ implies that $\F$ is essentially surjective. It will hence suffice to show that $\F$ is fully-faithful. Let $X,Y \in \C$ be objects and consider the map of spaces
\begin{equation}\label{e:mapping} 
\F_*:\Map_\C(X,Y) \lrar \Map_\D(\F(X),\F(Y)) .
\end{equation}
Since $\D$ is locally small there exists a small full subcategory $\U \subseteq \D$ which contains $\F(X),\F(Y) \in \D$. Let $\F^{-1}\U \subseteq \C$ be the inverse image of $\U$ in $\C$. Since $\pi_0\C \lrar \pi_0\D$ is injective we can find inside $\F^{-1}\U$ a small full subcategory $\V$ containing $X,Y$ and such that the inclusion $\V \subseteq \F^{-1}\U$ is an equivalence. Consider the resulting homotopy Cartesian square of $\infty$-categories
\begin{equation}\label{e:V}
\xymatrix{
\V \ar[r]\ar[d] & \U \ar[d] \\
\C \ar[r] & \D \\
}
\end{equation}
in which the vertical maps are fully-faithful inclusions.
Then for every small simplicial set $K$ the resulting square 
\begin{equation}\label{e:VI} 
\xymatrix{
\pi_0\V^K \ar[r]\ar[d] & \pi_0\U^K \ar[d] \\
\pi_0\C^K \ar[r] & \pi_0\D^K \\
}
\end{equation}
is Cartesian as well: indeed, the map $\pi_0\V^K \lrar \pi_0\C^K \times_{\pi_0\D^K}\pi_0\U^K$ is surjective because ~\eqref{e:V} is homotopy Cartesian (and hence remains homotopy Cartesian after applying $(-)^K$) and is injective because the vertical maps in~\eqref{e:VI} are injective. Since the bottom horizontal map in~\eqref{e:VI} is bijective it now follows that the top horizontal map is bijective. Since this is true for any small simplicial set $K$ and $\V,\U$ are small we get that the map $\V \lrar \U$ is necessarily an equivalence. We may then conclude that~\eqref{e:mapping} is an equivalence of spaces and so $\F$ is fully-faithful.
\end{proof}

\begin{rem}
In Lemma~\ref{l:set-theory} the assumption that $\C$ and $\D$ are locally small is essential. To see this, observe that if we could prove the claim without this assumption then we could also prove using Grothendieck universes that for a sufficiently large regular cardinal $\kappa$ the corepresentable homotopy functors $\Ho(\Cat_\infty) \lrar \Set$ associated to the collection of $\kappa$-small $\infty$-categories are jointly conservative in $\Ho(\Cat_\infty)$. Since the $\infty$-category $\cS$ of space is reflective inside $\Cat_\infty$ this would mean that $\Ho(\cS)$ contains a set of objects whose corepresentable functors are jointly conservative. But this is known to be false, see \cite[Corollary 2.3]{heller}.
\end{rem}

\begin{proof}[Proof of Theorem~\ref{t:limit2}]
In light of Lemma~\ref{l:set-theory} it will suffice to show that for every small simplicial set $K$, the map 
\begin{equation}\label{e:equiv} 
\pi_0\Ne(\Sec_{\J}(\M)^{\circ})^K \lrar \pi_0\Map^{\flat}_{\Ne(\I)}(X^{\flat},\fG^{\natural})^K
\end{equation}
induced by 
$v_{\psi}$ is bijective. 
We first note that by comparing the universal mapping property~\eqref{e:exp-rule} with the analogous property for the exponentiation by $K$ we see that 
\begin{equation}\label{e:equiv-2} 
\pi_0\Map^{\flat}_{\Ne(\I)}(X^{\flat},\fG^{\natural})^K\cong \pi_0\Map^{\flat}_{\Ne(\I)}\left(K^{\flat} \times X^{\flat},\fG^{\natural}\right).
\end{equation}
Now consider the category $\Sec_{\J}(\M)^{\fC(K)}$ of simplicial functors $\fC(K) \lrar \Sec_{\J}(\M)$, equipped with the projective model structure, and let $\left(\Sec_{\J}(\M)^{\fC(K)}\right)^{\circ} \subseteq \Sec_{\J}(\M)^{\fC(K)}$ denote the full simplicial subcategory spanned by the fibrant-cofibrant functors. We may then apply~\cite[Proposition 4.2.4.4]{higher-topos} to the model category $\mathbf{A} = \Sec_{\J}(\M)$ to deduce that the map
\begin{equation}\label{e:exp-2} 
\pi_0\Ne\Big(\Sec_{\J}(\M)^{\fC(K)}\Big)^{\circ} \lrar \pi_0\Ne\left(\Sec^{\circ}_{\J}(\M)\right)^K 
\end{equation}
is bijective. In addition, we also have a canonical equivalence of categories
\begin{equation}\label{e:exp} 
\Sec_{\J}(\M)^{\fC(K)} \simeq \Sec_{\fC(K) \times \J}\left(\M\right) 
\end{equation}
which identifies the projective model structure on the right 
with the twice nested projective model structure on the left. 
It will hence suffice to show that the map
\begin{equation}\label{e:equiv-3}
\pi_0\Ne\left(\Sec^{\circ}_{\fC(K) \times \J}\left(\M\right)\right) \lrar \pi_0\Map^{\flat}_{\Ne(\I)}\left(K^{\flat} \times X^{\flat},\fG^{\natural}\right) 
\end{equation}
obtained by composing~\eqref{e:exp-2},~\eqref{e:equiv} and~\eqref{e:equiv-2}, and using the identification~\eqref{e:exp}, is bijective. 
Unwinding the definitions we see that~\eqref{e:equiv-3} is the map induced on $\pi_0$ by the map $v_{\psi'}$ as in~\eqref{e:toprove}, associated to the composed weak equivalence $\psi': \fC(K \times X) \x{\simeq}{\lrar} \fC(K) \times \fC(X) \x{\psi_*}{\lrar} \fC(K) \times \J$. Replacing $X$ with $K \times X$, $\J$ with $\fC(K) \times \J$ and $\psi$ with $\psi' $ we may simply assume that $K = \Del^0$. It is left to show that the map
$$ (v_\psi)_*:\pi_0\Ne\left(\Sec^{\circ}_{\J}\left(\M\right)\right) \lrar \pi_0\Map^{\flat}_{\Ne(\I)}\left(X^{\flat},\fG^{\natural}\right) $$
induced on $\pi_0$ by~\eqref{e:toprove}
is a bijective. In light of Proposition~\ref{p:change-J} we may furthermore assume that $\J = \fC(X)$ and $\psi = \Id: \fC(X) \lrar \fC(X)$ is the identity. In this case, the map $(v_\psi)_*$ admits a particularly simple description. Indeed, every projectively fibrant/cofibrant functor $s: \fC(X) \lrar \int\M$ factors through
$$ \fC(X) \x{s}{\lrar} \int^{\circ}\M \hrar \int\M $$
and one can identify $(v_{\Id})_*([s]) \in \pi_0\Map^{\flat}_{\Ne(\I)}\left(X^{\flat},\fG^{\natural}\right)$ with the homotopy class of the marked map $s^{\ad}_+: X^{\flat} \lrar \fG^{\natural}$ determined by the adjoint $s^{\ad}: X \lrar \fG$ of $s$. 
We start by showing that $(v_{\Id})_*$ is surjective. Let $s^{\ad}_+: X^{\flat} \lrar \fG^{\natural}$ be a marked map over $\Ne(\I)$. It corresponds by adjunction to a map
$$ s: \fC(X) \lrar \int^{\circ}\M $$
over $\I$, determining a fibrant object in $\Sec^{\proj}_{\fC(X)}\left(\M\right)$. Let $s' \x{\sim}{\twoheadrightarrow} s$ be a trivial fibration from a cofibrant $s'$, so that 
$s' \in \Sec^{\circ}_{\fC(X)}\left(\M\right)$. The map $s'$ in turn corresponds to some other map $(s')^{\ad}_+: X^{\flat} \lrar \fG^{\natural}$ over $\Ne(\I)$. We now claim that $s^{\ad}_+$ and $(s')^{\ad}_+$ are equivalent in the $\infty$-category $\Map^{\flat}_{\Ne(\I)}\left(X^{\flat},\fG^{\natural}\right)$.
Indeed, the weak equivalence $s' \lrar s$ can be encoded by a simplicial functor 
$$h:[1] \times \fC(X) \lrar \int^{\circ}\M$$ 
over $\I$, where $[1] = \bullet \lrar \bullet$ is considered as a (mapping-wise discrete) simplicial category. Then $\Ne([1]) \cong \Del^1$ and the composed map $\Del^1 \times X \lrar \Del^1 \times \Ne(\fC(X)) \lrar \fG$ refines to a marked map $(\Del^ 1)^{\sharp} \times X^{\flat} \lrar \fG^{\natural}$ by Lemma~\ref{l:Cartesian-edge}. The latter determines an invertible edge in $\Map^{\flat}_{\Ne(\I)}\left(X^{\flat},\fG^{\natural}\right)$ from $(s')^{\ad}_+$ to $s^{\ad}_+$
and so
$$ [s^{\ad}_+] = [(s')^{\ad}_+] = (v_{\Id})_*[s'] \in \pi_0\Map^{\flat}_{\Ne(\I)}\left(X^{\flat},\fG^{\natural}\right) $$
is in the image of $(v_{\Id})_*$. It is left to show that $(v_{\Id})_*$ is injective. Let $s,t: \J \lrar \int\M$ be fibrant-cofibrant objects such that $s^{\ad}_+,t^{\ad}_+: X^{\flat} \lrar \fG^{\natural}$ are equivalent in the $\infty$-category $\Map^{\flat}_{\Ne(\I)}\left(X^{\flat},\fG^{\natural}\right)$. 
Since $\fG^{\natural}$ is fibrant in $(\Set^+_\Del)_{/\Ne(\I)}$ there exists a direct homotopy
$$ h: \left(\Del^1\right)^{\sharp} \times X^{\flat} \lrar \fG^{\natural} $$
from $s^{\ad}_+$ to $t^{\ad}_+$. By adjunction we obtain a map
$$ h_{\ad} : \fC(\Del^1 \times X^{\flat}) \lrar \int^{\circ}\M $$
whose restriction to $\fC(\{0\} \times X)$ is $s$ and whose restriction to $\fC(\{1\} \times X)$ is $t$. Furthermore, since the marked edges in $\fG^{\natural}$ are exactly the $p$-Cartesian edges, Lemma~\ref{l:Cartesian-edge} implies that 
the composed map
$$ \fC(\Del^1 \times \{x\}) \lrar \fC(\Del^1 \times X^{\flat}) \x{h_{\ad}}{\lrar} \int^{\circ}\M $$
determines a weak equivalence from $s(x)$ to $t(x)$ in $\M(x)$ for every vertex $x \in X$ (i.e., for every object $x \in \J = \fC(X)$). Note that the map $h_{\ad}$ is not yet an honest natural equivalence from $s$ to $t$ but only a \textbf{homotopy coherent} one. In order to strictify it we will need to employ Proposition~\ref{p:change-J} again. We can consider the map $h_{\ad}$ as a fibrant object in $\Sec_{\fC(\Del^1 \times X^{\flat})}\left(\M\right)$. We have a natural map
$$ \phi: \fC(\Del^1 \times X^{\flat}) \x{\simeq}{\lrar} \fC(\Del^1) \times \fC(X^{\flat}) $$
which is a weak equivalence of simplicial categories. From Proposition~\ref{p:change-J} it follows that there exists a fibrant-cofibrant object $h'_{ad} \in \Sec_{\fC(\Del^1) \times \fC(X^{\flat})}\left(\M\right)$ such that $\phi^*h'_{ad}$ is weakly equivalent to $h_{\ad}$. This implies, in particular, that the restriction $s' := h'_{ad}|_{\{0\} \times \fC(X)}$ 
is weakly equivalent to $s$ and the restriction $t':= h'_{ad}|_{\{1\} \times \fC(X)}$ 
is weakly equivalent to $t$. The map $h'_{ad}$ determines an honest weak equivalence from $s'$ to $t'$. 
We may hence conclude that $s$ is weakly equivalent to $t$ in the model category $\Sec_{\fC(X)}\left(\M\right)$ and hence weakly equivalent to $t$ in the simplicial category $\Sec^{\circ}_{\fC(X)}\left(\M\right)$, as desired.
\end{proof}

Now suppose that $E$ is a set of maps in $\J$ (i.e., a set of vertices in the various mapping simplicial sets of $\J$). We will denote by $\Sec_{\J,E}^{\circ}(\M) \subseteq \Sec^{\circ}_{\J}(\M)$ the full simplicial subcategory spanned by those fibrant-cofibrant sections $s: \J \lrar \int\M$ such that for every $\bet: j \lrar j'$ in $E$, which lies above a map $\alp: \vphi(j) \lrar \vphi(j')$ in $\I$, the composed map
$$ s(j) \x{s^{\bet}}{\lrar} \alp^*s(j') \lrar \RR\alp^*s(j') $$
is a weak equivalence in $\M(j)$, where $s^{\bet}$ is the adjoint of the map~\eqref{e:sbet} determined by the vertex $\bet \in \Map^{\alp}(j,j')$. We may now finally deduce the main result of this paper: 

\begin{cor}\label{c:E-car}
Let $\I$ be an ordinary category, $\M:\I \lrar \ModCat_{\Del}$ a simplicial Quillen presheaf taking values in combinatorial model categories and $\J$ a fibrant simplicial category equipped with a map $\vphi: \J \lrar \I$. Let $E$ be a set of maps in $\J$ and denote by $\Ne(\J,E)$ the marked simplicial set consisting of the coherent nerve of $\J$ with the edges corresponding to $E$ marked. Then there is a natural equivalence of $\infty$-categories
$$ \Ne\left(\Sec^{\circ}_{\J,E}(\M)\right) \simeq \Map^{\flat}\left(\Ne(\J,E),\fG^{\natural}\right).$$
\end{cor}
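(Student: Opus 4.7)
The plan is to derive this corollary from Theorem~\ref{t:limit2} by realizing both sides of the desired equivalence as full sub-$\infty$-categories of the two sides of~\eqref{e:toprove} and observing that these correspond to one another under that equivalence. Since $\J$ is fibrant, the counit $\psi: \fC(\Ne(\J)) \lrar \J$ is a weak equivalence of simplicial categories, so setting $X = \Ne(\J)$ in Theorem~\ref{t:limit2} yields an equivalence
$$v_{\psi}: \Ne\bigl(\Sec^{\circ}_{\J}(\M)\bigr) \x{\simeq}{\lrar} \Map^{\flat}_{\Ne(\I)}\bigl(\Ne(\J)^{\flat},\fG^{\natural}\bigr).$$
It then suffices to match the two full sub-$\infty$-categories on either side.

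For the target, the exponential rule~\eqref{e:exp-rule} identifies a $k$-simplex of $\Map^{\flat}_{\Ne(\I)}(\Ne(\J,E),\fG^{\natural})$ with a map $\Del^k \times \Ne(\J) \lrar \fG$ over $\Ne(\I)$ carrying the marked edges of $(\Del^k)^{\flat} \times \Ne(\J,E)$ to $p$-Cartesian edges of $\fG^{\natural}$. Since an edge of a product of marked simplicial sets is marked iff both components are, the marked edges of $(\Del^k)^{\flat} \times \Ne(\J,E)$ are those of the form $(e_1,e_2)$ with $e_1$ degenerate and $e_2 \in E$ (up to the automatically marked degeneracies). Thus the condition is that for every vertex $i\in \Del^k$ and every $e \in E$, the restriction $\{i\} \times \Ne(\J) \lrar \fG$ sends $e$ to a $p$-Cartesian edge. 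This exhibits $\Map^{\flat}_{\Ne(\I)}(\Ne(\J,E),\fG^{\natural})$ as the full sub-simplicial set of $\Map^{\flat}_{\Ne(\I)}(\Ne(\J)^{\flat},\fG^{\natural})$ spanned by those vertices $f:\Ne(\J) \lrar \fG$ which carry every edge of $E$ to a $p$-Cartesian edge.

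For the domain, let $s\in \Sec^{\circ}_{\J}(\M)$ be fibrant-cofibrant and let $\bet:j \lrar j'$ be in $E$, lying over $\alp:\vphi(j) \lrar \vphi(j')$. Since $s(j')$ is fibrant and $\alp^*$ is a right Quillen functor, the canonical comparison $\alp^*s(j') \lrar \RR\alp^*s(j')$ is a weak equivalence, so the condition defining $\Sec^{\circ}_{\J,E}(\M)$ reduces to requiring that $s^{\bet}: s(j) \lrar \alp^*s(j')$ be a weak equivalence for every $\bet\in E$. By Lemma~\ref{l:Cartesian-edge} this is exactly the condition that the edge of $\fG$ corresponding to $\bet$ under the map $\Ne(s):\Ne(\J) \lrar \fG$ is $p$-Cartesian. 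Therefore $\Ne(\Sec^{\circ}_{\J,E}(\M))$ is the full sub-$\infty$-category of $\Ne(\Sec^{\circ}_{\J}(\M))$ spanned by those sections whose associated adjoint map sends every edge in $E$ to a $p$-Cartesian edge.

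It remains to compare the two full sub-$\infty$-categories. Under the description of $v_{\psi}$ given in the proof of Theorem~\ref{t:limit2}, a vertex $[s]$ is sent to the class of the adjoint map $s^{\ad}_+: \Ne(\J)^{\flat} \lrar \fG^{\natural}$, so the two characterizations of the distinguished vertices agree tautologically. Since full sub-$\infty$-categories on matching vertex sets of equivalent $\infty$-categories are themselves equivalent, $v_{\psi}$ restricts to the desired equivalence. The only non-routine ingredient is the bridge between the derived-functor condition appearing in the definition of $\Sec^{\circ}_{\J,E}(\M)$ and the Cartesian-edge condition appearing in the target, which is supplied by Lemma~\ref{l:Cartesian-edge}; the remaining verifications are bookkeeping about marked products and the fibrancy-based identification $\alp^* s(j') \simeq \RR\alp^* s(j')$.
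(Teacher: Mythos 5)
Your proposal is correct and follows essentially the same route as the paper: apply Theorem~\ref{t:limit2} with $X = \Ne(\J)$ and the counit $\psi$, exhibit both sides of the desired equivalence as full sub-$\infty$-categories of the two sides of~\eqref{e:toprove}, and use Lemma~\ref{l:Cartesian-edge} to match the $E$-condition on sections with the condition of sending marked edges to $p$-Cartesian edges. The paper phrases the final matching as verifying that a square of fully-faithful inclusions is homotopy Cartesian, but this is the same argument as your vertex-by-vertex comparison.
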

\begin{proof}
We have a diagram of $\infty$-categories
$$ \xymatrix{
\Ne\left(\Sec^{\circ}_{\I,E}(\M)\right) \ar[r]\ar[d] & \Map^{\flat}_{\Ne(\I)}\left(\Ne(\J,E),\fG^{\natural}\right) \ar[d] \\
\Ne\left(\Sec^{\circ}_{\J}(\M)\right) \ar^-{\simeq}[r] & \Map^{\flat}_{\Ne(\I)}\left(\Ne(\J)^{\flat},\fG^{\natural}\right) \\
}$$
where the vertical maps are fully-faithful inclusions and the lower horizontal map is the equivalence of Theorem~\ref{t:limit2} associated to the counit map $\psi:\fC(\Ne(\J)) \x{\simeq}{\lrar} \J$. It will be enough to verify that this diagram is homotopy Cartesian. For this, it will suffice to show that a fibrant-cofibrant section $s: \J \lrar \int^{\circ}\M$ lies in $\Sec^{\circ}_{\I,E}(\M)$ if and only if the corresponding map $\Ne(\J) \lrar \fG$ sends every marked edge of $\Ne(\J,E)$ to a $p$-Cartesian edge. But this is a direct consequence of the characterization of $p$-Cartesian edges in $\int^{\circ}\M$ given by Lemma~\ref{l:Cartesian-edge}, and so the proof is complete.
\end{proof}


\begin{thebibliography}{a}


\bibitem{Bal18}
Balzin E., Reedy Model Structures in Families, preprint arXiv:1803.00681, 2018.


\bibitem{barwick}
Barwick C., On left and right model categories and left and right Bousfield localizations, \emph{Homology, Homotopy and Applications}, 12.2, 2010, p. 245-320


\bibitem{Ber11}
Bergner J. E., Homotopy limits of model categories and more general homotopy theories, \emph{Bulletin of the London Mathematical Society}, 44.2, 2011, p. 311--322.


\bibitem{Dug1}
Dugger, D., Combinatorial model categories have presentations, \emph{Advances in Mathematics}, 164.1, 2001, p. 177--201.

\bibitem{Dug2}
Dugger, D., Replacing model categories by simplicial ones, to appear in \emph{Transactions of the American Mathematical Society}.


\bibitem{GHN15}
Gepner D., Haugseng R., Nikolaus T., Lax colimits and free fibrations in $\infty$-categories, preprint arXiv:1501.02161, 2015.

\bibitem{primer}
Harpaz Y., Gray products and lax limits - a primer on $(\infty,2)$-categories, in preparation.


\bibitem{heller}
Heller A., On the representability of homotopy functors, \emph{Journal of the London Mathematical Society}, 2.3, 1981, p. 551--562.

\bibitem{higher-topos}
Lurie J., \emph{Higher Topos Theory}, Annals of Mathematics Studies, 170, Princeton University
Press, 2009, \url{http://www.math.harvard.edu/~lurie/papers/highertopoi.pdf}.

\end{thebibliography}
\end{document}